\documentclass[11pt, a4paper]{article}

\usepackage[margin=1in]{geometry}
\usepackage[utf8]{inputenc}
\usepackage[english]{babel}
\usepackage{graphicx}
\usepackage{amsmath, amssymb, amsthm} % Standard math/theorem packages
\usepackage{booktabs}
\usepackage{multirow}
\usepackage{longtable}
\usepackage{float}
\usepackage{url}
\usepackage{xcolor}
\usepackage{siunitx}
\usepackage{stfloats}

\usepackage{setspace}
\usepackage{tikz}
\usetikzlibrary{positioning, arrows.meta, patterns, calc, shapes.geometric}

\usepackage[authoryear,sort]{natbib}
\newtheorem{theorem}{Theorem}

\newtheorem{proposition}{Proposition}
\newtheorem{insight}{Insight}

\newtheorem{corollary}[theorem]{Corollary} 

\allowdisplaybreaks
\numberwithin{equation}{section} % Replaces \EquationsNumberedThrough

\title{\textbf{On the Entropy in Last-Mile Logistics}}
\author{
    Berry Gerrits \\ 
    \small University of Twente \\ 
    \small \texttt{b.gerrits@utwente.nl}
    \and
    Wouter van Heeswijk \\ 
    \small University of Twente \\ 
    \small \texttt{w.j.a.vanheeswijk@utwente.nl}
}
\date{February 2026}

\begin{document}

\maketitle

\begin{abstract}
Last-mile logistics (LML) is characterized by high fragmentation, yet existing research treats this as an exogenous constraint rather than a quantifiable and optimizable system property. This paper introduces a framework for measuring LML complexity using structural entropy, derived from Boltzmann's statistical mechanics. Unlike traditional KPIs such as distance or cost, structural entropy quantifies the cardinality of the configuration space, providing a diagnostic of inherent system disorder. We establish a formal duality with Shannon entropy, linking absolute complexity burden to distributional balance.

We apply our entropy framework to 6,112 Amazon last-mile routes across five U.S. cities. Current operations exhibit persistently high normalized entropy, indicating near-maximal fragmentation with limited room for algorithmic routing improvement. A stable non-linear scaling relationship between entropy and route distance validates the metric as a predictive indicator of operational difficulty. To evaluate spatial consolidation, we develop a system-wide entropy measure accounting for all movements by both carriers and customers. We establish a theoretical conservation principle: under idealized conditions, spatial consolidation merely redistributes entropy from carrier to customer. Both idealizing conditions are systematically violated in practice, thereby actually increasing total system entropy (including both carrier and customer perspectives). Our system-wide measure reveals that from the carrier perspective, spatial consolidation reduces carrier entropy by up to 40\% under aggressive adoption, but increases total system entropy by activating customer collection trips, though trip chaining can diminish or reverse this effect. Temporal consolidation, by contrast, genuinely reduces entropy by decreasing delivery events without creating new movements. By formalizing fragmentation as a measurable structural property, this research provides a new lens for network design, consolidation policy, and evaluation last-mile system performance.
\end{abstract}

\textbf{Keywords:} Last-mile Logistics, Entropy, Consolidation, System Complexity, Performance Measurement

\section{Introduction}
\label{sec:intro}

Last-mile logistics (LML) is the most economically volatile segment of the supply chain, with expenses accounting for up to 53\% of total costs \citep{lim2025cutting} due to its inherently fragmented and dispersed nature \citep{savelsbergh201650th, vanheeswijk2020evaluating}. Despite this, fragmentation is almost universally treated as an exogenous system property \citep{su14020911}. Prevailing optimization models focus on minimizing operational realizations---primarily cost, distance, or environmental impact \citep{vanheeswijk2020evaluating, halldorsson2020last, brochado2024performance}---yet they fail to quantify the underlying structural complexity that governs these outcomes. Two last-mile systems may yield identical distance and service levels, yet possess fundamentally different configurations of demand that respond divergently to exogenous factors such as demand fluctuation, policy shifts and operational disruptions. We argue that fragmentation is a fundamental system property that requires formal measurement and, ultimately, strategic optimization.

Existing industry solutions, such as urban consolidation centers, pickup points, and parcel lockers, intuitively target fragmentation by concentrating demand in space \citep{Mangiaracina2019,vanheeswijk2019delivery}. While these interventions are celebrated for reducing carrier stop counts \citep{janjevic2017investigating} and mitigating not-at-home risks, they are often evaluated through the lenses of economies of scale, emissions reduction, or traffic mitigation \citep{wang2018innovation}. What remains absent is a metric that captures the structural transformation of the network itself.

This paper addresses this gap by introducing \textit{structural entropy} as a novel indicator for last-mile systems. Drawing from the statistical mechanics of \cite{Clausius1879Mechanical}, we posit that entropy provides a mathematically rigorous framework to quantify the degree of disorder within a delivery network. By mapping delivery demand onto discrete spatial zones and temporal slots, entropy complements traditional KPIs (cost, distance, lead time) by revealing inefficiencies obscured by aggregation. Unlike Shannon entropy \citep{shannon1948} used to quantify street orientations \citep{gudmundsson2013entropy} or Maximum Entropy Models used in demand estimation \citep{abbas2006maximum}, and distinct from entropy-regulation in search heuristics \citep{ahmed-entropy-regulation}, our Boltzmann-based approach \citep{boltzmannentropy} quantifies the \textit{cardinality of the configuration space}---the number of distinct ways a logistics system can be arranged. Moreover, widely used KPIs---cost, distance, and service level \citep{VegaMejia2019}, and stop count---measure operational \textit{outcomes} but leave the \textit{structure} of the delivery problem unmeasured. This distinction is critical: 

\begin{itemize} 
\item \textbf{Distance} reflects routing efficiency but is confounded by the performance of the chosen algorithm; entropy measures the structural dispersion of demand independent of the route. 
\item \textbf{Cost} aggregates structural inefficiencies with controllable decisions like vehicle selection or pricing; entropy provides a diagnostic of the underlying demand configuration.
\item \textbf{Service level} monitors reliability but fails to explain \textit{why} a system is difficult to maintain; entropy exposes the structural constraints that limit operational flexibility. 
\item \textbf{Stop count} records the number of distinct delivery location but is blind to how parcels are distributed across them and subsequently the fragmentation that exists, i.e., whether parcels cluster at a few stops or spread uniformly across all of them.
\end{itemize}

By isolating the structural component, entropy reveals vulnerabilities---such as susceptibility to demand fluctuations---that traditional metrics mask. Furthermore, entropy is distinct from network topology metrics like node connectivity or density \citep{anderson, amaral2020}, as it does not account for the actual flow and volume of demand (e.g., full-truck load versus less-than-full). It also transcends operational fairness metrics, such as Gini coefficients or variation in route duration \citep{matl2017}. While typical logistics optimization (e.g., facility location problem) treats dispersion as a fixed input to be covered \citep{snyder2004}, we treat it as a property to be quantified and managed.

The contribution of this work is threefold. First, we define entropy in the context of last-mile logistics, demonstrating how it reflects fragmentation across various demand states. Second, we formally analyze system behavior under diverse scenarios, identifying the conditions under which spatial and temporal consolidation reduces systemic disorder. Third, we demonstrate managerial relevance through an empirical study of Amazon's last-mile operations, revealing the inherent trade-offs between efficiency, service, and sustainability.

The remainder of this paper is structured as follows. 
Section \ref{sec:entropy} defines the structural entropy-based KPI, analyses theoretical properties such as the impact of spatial and temporal consolidation, and introduces extensions that handle real-world complexities such as preferred parcel points, failed deliveries and trip chaining.
Section \ref{sec:case_study} reports computational experiments and applies the proposed framework to Amazon's last-mile operations. Finally, Section \ref{sec:discussion} discusses managerial implications for logistics service providers, policy makers, and consumers, and Section \ref{sec:conclusion} concludes the paper.

\section{Defining Entropy in Last-Mile Logistics}
\label{sec:entropy}
Last-mile logistics (LML) systems are characterized by a high cardinality of potential operational states. We propose that the fundamental challenge of the last mile---fragmentation---can be rigorously quantified by measuring the disorder induced by these states. We adopt the Boltzmann definition of entropy to measure the number of ways a system's states can be arranged, or its \textit{structural complexity}. In this conceptualization, a system with high entropy possesses a large configuration space (high fragmentation), while a low-entropy system is highly constrained and consolidated. 

The remainder of this section is organized as follows. Section~\ref{ssec:structural_entropy} formalizes the definition of structural entropy $G$. Section~\ref{ssec:structural_vs_shannon} establishes the duality between structural complexity and distributional balance, situating structural (Boltzmann) entropy $G$ alongside Shannon entropy. Section~\ref{ssec:consolidation_dynamics} analyzes the fundamental mechanics of consolidation, establishing a theoretical conservation principle, demonstrating that its assumptions are violated in practice, and developing a system-wide entropy measure that accounts for all movements by both carrier and customers. Section \ref{subsec:generalized_entropy} generalizes the definition of structural entropy to accommodate real-life mechanisms, including failed deliveries, pickup point restrictions, direct-to-pickup flows, and customer trip chaining.

\subsection{Structural Entropy as a Measure of Fragmentation}
\label{ssec:structural_entropy}

Let a logistics macrostate be defined by a set of $N$ parcels to be distributed among $K$ discrete service locations (e.g., households, lockers, or hubs). The macrostate is characterized solely by aggregate parcel counts and therefore treats parcels as indistinguishable at the macroscopic level. Let $p_k$ denote the number of parcels assigned to location $k \in \{1, \dots, K\}$, such that
\[
\sum_{k=1}^{K} p_k = N.
\]

To quantify the size of the underlying configuration space, we distinguish between macrostates and microstates. A \emph{microstate} represents a specific assignment of individual (distinguishable) parcels to service locations, whereas the \emph{macrostate} records only the resulting occupancy numbers $\{p_k\}_{k=1}^K$ and ignores parcel identity. The number of distinct microstates $W$ compatible with a given macrostate is therefore given by the multinomial coefficient

\begin{equation*}
W = \frac{N!}{\prod_{k=1}^{K} p_k!}.
\end{equation*}

We define the structural entropy $G$ as the natural logarithm of the configuration cardinality,

\begin{equation*}
G = \ln(W) = \ln(N!) - \sum_{k=1}^{K} \ln(p_k!).
\end{equation*}

Unlike traditional distance-based metrics, $G$ is a structural diagnostic. It quantifies the inherent disorder of the demand distribution before a single vehicle route is constructed.

\subsection{Duality of Fragmentation and Balance}
\label{ssec:structural_vs_shannon}
To fully characterize a logistics network, one must distinguish between the \textit{complexity of the structure} ($G$) and the \textit{balance of the demand distribution}. We address this by situating our structural (Boltzmann) entropy $G$ alongside the classical Shannon entropy $H$. While $G$ counts the number of discrete arrangements, Shannon entropy $H$ quantifies the unpredictability or evenness of the parcel distribution across locations:

\begin{equation*}
H = -\sum_{k=1}^{K} \phi_k \ln(\phi_k), \quad \text{where } \phi_k = \frac{p_k}{N}
\end{equation*}

In LML, an efficient system minimizes $G$ (reducing structural fragmentation) while maintaining an appropriate $H$ (ensuring balanced utilization of infrastructure). The relationship between these two metrics defines a classification of entropy quadrants, as outlined in Table~\ref{tab:entropy_quadrants}.

\begin{table}[ht]
\centering
\caption{Classification of entropy quadrants. In the ideal case, structural (Boltzmann) entropy $G$ is low and Shannon entropy $H$ is high, implying that stops are highly grouped and infrastructure is fully utilized.}
\label{tab:entropy_quadrants}
\begin{tabular}{@{}ll@{}}
\toprule
\textbf{Entropy Profile} & \textbf{System Implication} \\ 
\midrule
Low $G$, High $H$  & Highly grouped stops; infrastructure is fully utilized. \\
High $G$, High $H$ & Extreme dispersion; demand is balanced but fragmented. \\
Low $G$, Low $H$   & Grouped stops, but demand is skewed to a few nodes. \\
High $G$, Low $H$  & Dispersed demand with high local volatility. \\
\bottomrule
\end{tabular}
\end{table}

\begin{proposition}[Scale-Dependency and Asymptotic Coupling]
\label{prop:scaling}
Structural entropy $G$ is an extensive property sensitive to system volume $N$, whereas Shannon entropy $H$ is an intensive, scale-invariant property. For a fixed distribution $\phi_k = p_k/N$, the relationship is governed by:

\begin{equation*}
G \approx N \cdot H
\end{equation*}

\end{proposition}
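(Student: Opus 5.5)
The plan is to apply Stirling's approximation to every factorial in $G = \ln(N!) - \sum_{k=1}^K \ln(p_k!)$. The scaling regime is fixed as follows: the shares $\phi_k$ are held constant, so that $p_k = \phi_k N$, and $N$ is allowed to grow. We may assume every $\phi_k>0$. Locations with $p_k=0$ add $\ln 0! = 0$ to $G$ and, by the convention $0\ln 0 = 0$, nothing to $H$, so they can be dropped. Then each $p_k \to \infty$ along with $N$, and the approximation applies to all terms at once.

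\textbf{Step 1.} We use the form $\ln(n!) = n\ln n - n + \tfrac{1}{2}\ln(2\pi n) + O(1/n)$. Substituting it into $G$, the linear terms $-N + \sum_k p_k$ cancel exactly, because $\sum_k p_k = N$. What remains is
\begin{equation*}
G = N\ln N - \sum_{k=1}^K p_k \ln p_k + R_N, \qquad R_N = \tfrac{1}{2}\ln(2\pi N) - \tfrac{1}{2}\sum_{k=1}^K \ln(2\pi p_k) + O(1/N).
\end{equation*}

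\textbf{Step 2.} We write $p_k\ln p_k = N\phi_k(\ln N + \ln\phi_k)$ and use $\sum_k \phi_k = 1$. The $N\ln N$ terms then cancel, and the leading part becomes $-N\sum_k \phi_k\ln\phi_k = N H$. This is an exact algebraic identity once the Stirling leading terms are in place.

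\textbf{Step 3.} This step is the one that actually matters, because it gives the statement its meaning: we must control $R_N$. Writing $\ln(2\pi p_k) = \ln(2\pi N) + \ln\phi_k$ gives
\begin{equation*}
R_N = -\tfrac{K-1}{2}\ln(2\pi N) - \tfrac{1}{2}\sum_{k=1}^K \ln\phi_k + O(1/N).
\end{equation*}
So $R_N = O(\ln N)$, with $K$ and $\phi$ fixed, while $NH$ grows linearly whenever $H>0$. This yields $G/(NH) \to 1$, which is how we will interpret ``$G \approx N\cdot H$''. It also gives the explicit correction term. The result is consistent with the entropy bound $G \le NH$, since $R_N$ is eventually negative.

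\textbf{Step 4.} The extensive/intensive claim now follows directly. Since $\phi$ is fixed, $H$ depends only on $\phi$, so it is scale-invariant. $G$ grows linearly in $N$ to leading order.

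The main obstacle is being precise about where the approximation holds. It fails when some $p_k$ are small, which is typical of last-mile data with one or two parcels per stop. It also fails when $K$ grows with $N$, because then the $\tfrac{K-1}{2}\ln N$ term is no longer negligible. The natural first attempt is to state the result as the asymptotic equivalence above with $K$ fixed and all $\phi_k>0$, and to remark that for sparse configurations $NH$ should be read as an upper bound on $G$ rather than an equality. The upper bound itself needs a separate argument, which should come from the multinomial theorem: $W\prod_k \phi_k^{p_k} \le 1$.
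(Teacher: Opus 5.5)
Your proposal is correct and follows essentially the same route as the paper: apply Stirling to every factorial, cancel the linear terms via $\sum_k p_k = N$, then substitute $p_k = N\phi_k$ to recover $NH$. Your explicit remainder $R_N = -\tfrac{K-1}{2}\ln(2\pi N) - \tfrac{1}{2}\sum_k \ln\phi_k + O(1/N)$ is the paper's appendix correction $\tfrac{1}{2}\bigl[\ln(2\pi N) - \sum_k \ln(2\pi p_k)\bigr]$ rewritten in terms of $\phi$; your multinomial-theorem bound $G \le NH$ is a valid non-asymptotic addition that the paper does not give.
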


\begin{proof}
Recall the definition $G = \ln(N!) - \sum_{k=1}^K \ln(p_k!)$. For large $N$, we apply Stirling's approximation, $\ln(n!) \approx n \ln n - n$ (see Appendix~\ref{appendix_stirling} for the full derivation).

\begin{align*} 
G &\approx (N \ln N - N) - \sum_{k=1}^K (p_k \ln p_k - p_k) \\ 
&= N \ln N - N - \sum_{k=1}^K p_k \ln p_k + \sum_{k=1}^K p_k \\ 
&= N \ln N - \sum_{k=1}^K p_k \ln p_k \quad (\text{since } \sum p_k = N) 
\end{align*}

Substituting $p_k = N\phi_k$:

\begin{align*}
G &\approx N \ln N - \sum_{k=1}^{K} (N\phi_k) \ln(N\phi_k)\\
&= N \ln N - \sum_{k=1}^{K} N\phi_k (\ln N + \ln \phi_k)\\
&= N \ln N - N \ln N \sum_{k=1}^{K} \phi_k - N \sum_{k=1}^{K} \phi_k \ln \phi_k \\
&= -N \sum_{k=1}^{K} \phi_k \ln \phi_k \quad (\text{since } \sum \phi_k = 1) \\
&= N \cdot H 
\end{align*}

Thus, $G \approx N \cdot H$. This proves that while $H$ remains constant regardless of $N$ (given a fixed relative distribution), $G$ scales linearly with system volume, capturing the increasing structural complexity of larger networks.
\end{proof}

\begin{insight}[Volume vs. Density]Proposition \ref{prop:scaling} clarifies the diagnostic roles of the two metrics. $H$ identifies the \textit{density} of fragmentation (e.g., how balanced the deliveries are across locations), whereas $G$ identifies the \textit{absolute complexity burden} placed on the carrier. A city that doubles its delivery volume $N$ while maintaining the same relative distribution $\phi$ will see its Shannon entropy $H$ remain unchanged, but its structural entropy $G$ will double, reflecting the intensified routing and sequencing challenge.
\end{insight}

\subsection{Consolidation Dynamics: From Conservation to System-Wide Entropy} \label{ssec:consolidation_dynamics}
Logistics interventions may reduce structural entropy through consolidation along two dimensions: spatial (grouping parcels at shared service points) and temporal (batching deliveries across time windows). To evaluate the net impact of such interventions, we distinguish between two observer perspectives. The carrier observes entropy at the resolution of delivery stops---a pickup point holding $p_k$ parcels registers as a single stop, regardless of the number of customers it serves. The system-wide observer, by contrast, accounts for the full chain from dispatch to final receipt, including the collection activities that spatial consolidation transfers to customers. Whether an intervention achieves a genuine reduction in total structured entropy $G$, or merely redistributes it across the firm boundary, depends on the assignment structure at the interface between carrier and customer---a distinction we develop in three stages below.

\subsubsection{A Theoretical Conservation Principle}
\label{sssec:conservation}
We begin by establishing a theoretical baseline. Under idealized assumptions, spatial consolidation (see Figure~\ref{fig:spatial_consolidation}) does not reduce the total configuration space of the system but instead redistributes entropy across organizational boundaries. This conservation result, while not directly applicable to most real logistics operations, clarifies the mechanism by which spatial consolidation transfers complexity and motivates the system-wide entropy measure that follows.

Consider a theoretical scenario in which \textit{all} $N$ parcels are routed to $K$ shared service points---no home deliveries remain. We impose two symmetry conditions (that we relax in Section~\ref{sssec:breaking_symmetry}): (i) customers are unconstrained in their choice of service point, and (ii) parcels at each service point are indistinguishable from the collection perspective, so that any of the $p_k$ parcels can be matched to any arriving customer.

Under these conditions, the delivery-side entropy $G^{\mathrm{delivery}} = \ln(N!) - \sum_{k=1}^K \ln(p_k!)$ captures the number of ways the carrier can arrange parcels across stops. The collection-side entropy $G^{\mathrm{pickup}} = \sum_{k=1}^K \ln(p_k!)$ captures the $p_k!$ ways parcels can be matched to customers at each service point.

\begin{proposition}[Conservation of Structural Entropy]
\label{prop:conservation}
Under the symmetric conditions stated above, the sum $G^{\mathrm{delivery}} + G^{\mathrm{customer}} = \ln(N!)$ is invariant to the degree of spatial partitioning $K$.
\end{proposition}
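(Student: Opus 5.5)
The plan is to prove Proposition~\ref{prop:conservation} by direct computation from the two definitions, and then to justify the identity combinatorially so that it reads as a statement about configuration spaces rather than a cancellation of logarithms. Throughout, $G^{\mathrm{customer}}$ in the statement is identified with the collection-side quantity $G^{\mathrm{pickup}} = \sum_{k=1}^K \ln(p_k!)$ defined just above. With that identification, adding $G^{\mathrm{delivery}} = \ln(N!) - \sum_{k=1}^K \ln(p_k!)$ gives
\[
G^{\mathrm{delivery}} + G^{\mathrm{customer}} = \ln(N!).
\]
The right-hand side depends only on $N$. It is therefore independent of $K$ and of the occupancy vector $(p_1,\dots,p_K)$, provided $\sum_k p_k = N$.

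To give this content beyond telescoping, I will exhibit a bijection. A full end-to-end configuration assigns each of the $N$ distinguishable parcels to a distinct customer, which gives $N!$ configurations. Any such assignment factors uniquely into two pieces:
\begin{itemize}
\item a microstate placing parcels at service points with occupancies $p_k$; there are $W = N!/\prod_k p_k!$ of these;
\item within each service point $k$, a matching of its $p_k$ parcels to the $p_k$ customers who collect there; there are $\prod_k p_k!$ such matchings.
\end{itemize}
The product is $N!$, and taking logarithms recovers the identity. The two symmetry conditions are used exactly here. Condition (i), unconstrained choice of service point, ensures every customer-to-point assignment consistent with the occupancies is admissible, so no configurations are excluded. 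Condition (ii), indistinguishability at collection, ensures all $p_k!$ matchings at each point are feasible and equally counted. The extreme cases serve as a sanity check. For $K = N$ with $p_k = 1$, we get $G^{\mathrm{customer}} = 0$ and all entropy sits with the carrier. For $K = 1$, we get $G^{\mathrm{delivery}} = 0$ and all entropy sits with the customers.

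The main obstacle is interpretive rather than computational. The statement writes $G^{\mathrm{customer}}$ while the setup defines $G^{\mathrm{pickup}}$, so I must state explicitly that these are the same quantity. I also need to be careful that invariance to $K$ means invariance across all partitions with fixed total $N$, including empty service points; these contribute $\ln 0! = 0$ and do not affect the argument. Finally, I will note that no Stirling approximation is needed, so unlike Proposition~\ref{prop:scaling} the result is exact for every finite $N$.
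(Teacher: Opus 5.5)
Your proposal is correct and follows the paper's proof, which is precisely the one-line cancellation of $\sum_{k=1}^K \ln(p_k!)$ between $G^{\mathrm{delivery}}$ and the collection-side term; reading $G^{\mathrm{customer}}$ as $G^{\mathrm{pickup}}$ is the intended interpretation. The bijection you add is a sound supplement, but it implicitly fixes which $p_k$ customers collect at each point $k$: if the customer-to-point allocation were also allowed to vary, as a literal reading of condition (i) suggests, the count would be $(N!)^2/\prod_k p_k!$ rather than $N!$, so the role you assign to condition (i) there should be tightened.
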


\begin{proof}
\begin{align*}
G^{\mathrm{delivery}} + G^{\mathrm{customer}}
&= \left[ \ln(N!) - \sum_{k=1}^K \ln(p_k!) \right]
+ \left[ \sum_{k=1}^K \ln(p_k!) \right]
= \ln(N!).
\end{align*}
\end{proof}

This conservation result reveals that under symmetric conditions, spatial consolidation is a zero-sum transformation: by moving parcels from $N$ doorsteps to $K$ lockers, the carrier exports configuration complexity to the customer base. The service point acts as an interface where delivery-side entropy converts into collection-side entropy.

\begin{figure}[H]
    \centering
    \includegraphics[width=0.5\linewidth]{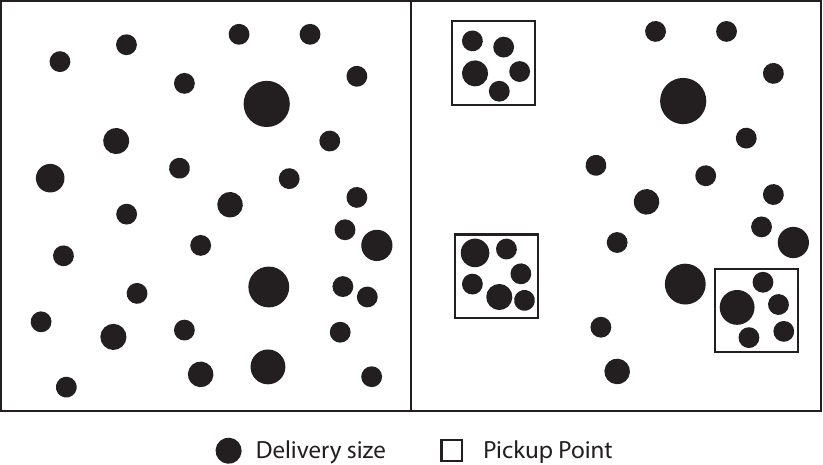}
    \caption{Spatial consolidation through pickup points. Under idealized symmetric conditions, reducing carrier entropy transfers equivalent complexity to the customer-side collection process, conserving total structural entropy.}
    \label{fig:spatial_consolidation}
\end{figure}

\subsubsection{Breaking the Conservation: From Theory to Practice}
\label{sssec:breaking_symmetry}

The conservation result, while not applicable to most last-mile logistics systems, may hold in settings where items at service points are genuinely interchangeable and customers are unconstrained in their collection choice---for instance, prize or promotional item distribution at collection points, vaccine allocation across walk-in clinics, or food bank distribution where identical packages are dispensed to any arriving citizen. Most other last-mile logistics systems violate the two symmetry conditions:

\begin{enumerate}
    \item \textbf{Unconstrained customer choice.} In practice, customers select a distinct service point. This deterministic, constrained assignment eliminates the combinatorial freedom in ``which customer goes where.''
    
    \item \textbf{Indistinguishable parcels.} In practice, every parcel is labeled for a specific customer. When customer $i$ arrives at locker A, they retrieve their own parcel(s)---not any of the $p_k$ parcels stored there. The collection process is fully determined, leaving no arrangement ambiguity at the service point.
\end{enumerate}

When both conditions are broken, the service point no longer functions as a combinatorial interface. There is no distribution problem to solve: each customer is assigned to a specific point and retrieves a specific parcel (or parcels). Customer-side structural entropy vanishes entirely. The delivery-side entropy decreases through consolidation, and nothing compensates on the customer side. Total structural entropy genuinely decreases.

However, this accounting omits an important element. Under home delivery, the logistics system is one-sided: only the carrier traverses the network, and customers remain stationary. When parcels are redirected to service points, a new class of movements emerges: individual customer collection trips. Each customer assigned to a pickup point must travel to the service point and return---a movement that did not exist under home delivery. Structural entropy, which counts arrangements of parcels across locations, is blind to these newly created movements. To capture the full system-wide impact of spatial consolidation, we require a measure that accounts for all movements by all actors, as detailed in Section~\ref{sssec:system_entropy}. 

\subsubsection{System-Wide Entropy}
\label{sssec:system_entropy}

We define the system-wide entropy as the sum of delivery-side and collection-side entropies, where both components count distinct movement arrangements using the same multinomial structure.

Consider $N$ parcels belonging to $C^{\mathrm{total}}$ unique customers. Let $C^{\mathrm{pickup}}$ denote the number of unique customers assigned to pickup points and $C^{\mathrm{home}}$ the number receiving home delivery, with $C^{\mathrm{total}} = C^{\mathrm{pickup}} + C^{\mathrm{home}}$. Note that $C^{\mathrm{pickup}} \leq N_{\mathrm{pickup}}$, since a single customer may have multiple parcels consolidated at one service point.

On the delivery side, the carrier visits $K$ pickup points (with $p_k$ parcels at point $k$) and $C^{\text{home}}$ individual homes (each receiving $q_i$ parcels):

\begin{equation}
\label{eq:delivery_entropy}
    G^{\mathrm{delivery}} = \ln(N!) - \sum_{k=1}^{K} \ln(p_k!) - \sum_{i=1}^{C^{\text{home}}} \ln(q_i!).
\end{equation}

On the collection side, each of the $C^{\mathrm{pickup}}$ unique customers generates exactly one collection trip, regardless of how many parcels they collect---just as the carrier makes one stop at a pickup point regardless of how many parcels it delivers. Since each trip is individual and unconsolidated---each customer travels independently to their assigned service point---the collection side is maximally fragmented:

\begin{equation*}
\label{eq:collection_entropy}
    G^{\mathrm{pickup}} = \ln(C^{\text{pickup}}!).
\end{equation*}

The total system entropy is:
\begin{equation*}
\label{eq:total_entropy}
    G^{\mathrm{total}} = G^{\mathrm{delivery}} + G^{\mathrm{pickup}} = \left[\ln(N!) - \sum_{k=1}^{K} \ln(p_k!) - \sum_{i=1}^{C^{\mathrm{home}}} \ln(q_i!)\right] + \ln(C^{\mathrm{pickup}}!).
\end{equation*}

We normalize by the home delivery baseline, in which every customer receives exactly one parcel delivered to their home address and no collection trips occur:

\begin{equation*}
\label{eq:normalized_entropy}
    G^{\text{norm}} = \frac{G^{\text{total}}}{\ln(N!)}.
\end{equation*}

\begin{proposition}[System Entropy Increases Under Spatial Consolidation]
\label{prop:activity_increase}
Let $N$ parcels each belong to a unique customer ($C^{\text{total}} = N$). Under pure home delivery ($C^{\mathrm{pickup}} = 0$, $q_i = 1$ for all $i$), the baseline entropy is $G^{\mathrm{total}} = \ln(N!)$. Under pure pickup consolidation ($C^{\mathrm{pickup}} = N$), with parcels distributed across $K > 1$ service points:

\begin{equation*}
    G^{\text{total}} = 2\ln(N!) - \sum_{k=1}^{K} \ln(p_k!) > \ln(N!).
\end{equation*}

Total system entropy strictly exceeds the home delivery baseline.
\end{proposition}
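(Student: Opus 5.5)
The plan is to compute $G^{\mathrm{total}}$ directly from the definitions in Section~\ref{sssec:system_entropy} in the two regimes and then compare them. The only inequality needed is that a multinomial coefficient exceeds one whenever it is genuinely nontrivial.

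\emph{Step 1: the home-delivery baseline.} With $C^{\mathrm{pickup}}=0$, there are no pickup points contributing parcels, so $K$ plays no role and the $p_k$-sum is empty. With $q_i=1$ for all $N$ customers, each $\ln(q_i!)=\ln 1=0$. Equation~\eqref{eq:delivery_entropy} therefore gives $G^{\mathrm{delivery}}=\ln(N!)$. Since $G^{\mathrm{pickup}}=\ln(0!)=0$, we get $G^{\mathrm{total}}=\ln(N!)$.

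\emph{Step 2: pure pickup consolidation.} Here $C^{\mathrm{home}}=0$, so the $q_i$-sum is empty, and $\sum_k p_k=N$. Hence $G^{\mathrm{delivery}}=\ln(N!)-\sum_{k=1}^K\ln(p_k!)$. Each of the $N$ customers has a single parcel, so $C^{\mathrm{pickup}}=N$ and $G^{\mathrm{pickup}}=\ln(N!)$. Adding the two gives the displayed expression $2\ln(N!)-\sum_k\ln(p_k!)$.

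\emph{Step 3: strict inequality.} After subtracting $\ln(N!)$, the claim reduces to
\[
\ln(N!)-\sum_{k=1}^K\ln(p_k!)=\ln\frac{N!}{\prod_k p_k!}>0,
\]
that is, the multinomial coefficient $W$ must exceed $1$. I would argue combinatorially. $W$ counts assignments of $N$ distinguishable parcels to the $K$ points with the prescribed occupancies. If at least two of the $p_k$ are positive, say $p_a,p_b\ge1$, then swapping one parcel at $a$ with one at $b$ yields a different assignment with the same occupancies, so $W\ge2$. Alternatively, one can use $W\ge\binom{N}{p_a}\ge N>1$.

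\emph{Main obstacle.} The hypothesis ``$K>1$'' must be read as ``at least two service points actually receive parcels,'' i.e.\ all $p_k\ge1$. If all parcels sit at a single point (for instance $K$ points with $K-1$ of them empty), then $W=1$ and the inequality degenerates to equality. I would state explicitly that the $K$ service points are taken to be the nonempty ones, which matches the paper's convention that the carrier visits $K$ pickup points as stops. Under this convention $p_k\ge1$ for every $k$, and $K>1$ gives the two nonempty occupancies that Step~3 requires. This also needs $N\ge2$, which is automatic.
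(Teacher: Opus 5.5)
Your proof is correct and follows the same route as the paper: you compute $G^{\mathrm{total}}$ directly from the definitions in both regimes and reduce the strict inequality to $\sum_{k}\ln(p_k!)<\ln(N!)$. The paper simply asserts this inequality for a ``non-trivial partitioning ($K>1$)''; your multinomial argument ($W\ge\binom{N}{p_a}\ge N>1$) and your explicit requirement that at least two service points be nonempty supply the justification that phrase leaves implicit.
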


\begin{proof}
Under pure home delivery, $G^{\mathrm{delivery}} = \ln(N!)$ and $G^{\mathrm{pickup}} = \ln(0!) = 0$, giving $G^{\mathrm{total}} = \ln(N!)$. Under pure pickup consolidation, $G^{\mathrm{delivery}} = \ln(N!) - \sum_{k=1}^{K} \ln(p_k!)$ and $G^{\mathrm{pickup}} = \ln(N!)$, giving $G^{\mathrm{total}} = 2\ln(N!) - \sum_{k=1}^{K} \ln(p_k!)$. Since $\sum_{k=1}^{K} \ln(p_k!) < \ln(N!)$ for any non-trivial partitioning ($K > 1$), the result follows.
\end{proof}

\begin{insight}[The Structural Asymmetry of Spatial Consolidation]
\label{insight:asymmetry}
Proposition~\ref{prop:activity_increase} reveals why spatial consolidation increases total system entropy despite reducing the carrier's operational complexity. Under home delivery, only the carrier moves---$N$ delivery stops, zero customer trips. Under pickup consolidation, the carrier makes fewer stops, but $N$ customers each make an individual collection trip. The carrier benefits from consolidation (many parcels per stop), but customer collection trips are inherently unconsolidated (one trip per customer). When customers have multiple parcels ($C^{\mathrm{pickup}} < N_{\mathrm{pickup}}$), the collection-side entropy is reduced through natural consolidation, but total system entropy still exceeds the baseline whenever $C^{\mathrm{pickup}} > 0$ and $K > 1$. Theoretically speaking, when no (fewer) collection trips are induced, entropy could decrease (be conserved), see Section~\ref{ssec:interpretation_regimes} that discusses the implications of \textit{trip chaining}.
\end{insight}

\subsubsection{Temporal Consolidation and Entropy Reduction}
Unlike spatial consolidation, which increases total system entropy by activating customer movements, temporal consolidation acts as a mechanism for genuine complexity destruction. By synchronizing delivery times, the system collapses the possible ordering permutations of the delivery sequence without creating new movements (see Figure~\ref{fig:temporal_consolidation}).

\begin{proposition}[Temporal Entropy Reduction] 
\label{prop:temporal_sink} 
Let $N$ parcels be temporally consolidated into $C \leq N$ discrete delivery events. The structural entropy of the delivery system is reduced from $\ln(N!)$ to $\ln(C!)$: 

\begin{equation*} 
G^{\mathrm{batched}} = \ln(C!) \le \ln(N!) = G^{\mathrm{original}} \end{equation*} 
\end{proposition}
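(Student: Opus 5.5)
The plan is to read the proposition as two claims. The first is a modelling identification: the batched system has structural entropy $\ln(C!)$. The second is an inequality: $\ln(C!)\le\ln(N!)$ whenever $C\le N$.

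For the identification, I will use the system-wide accounting of Section~\ref{sssec:system_entropy}, in which entropy counts distinct movement arrangements. Before batching, the $N$ parcels form $N$ separate delivery events, each one stop carrying one parcel, as in the home-delivery baseline of Proposition~\ref{prop:activity_increase}. Equation~\eqref{eq:delivery_entropy} with $K=0$ and $q_i=1$ for all $i$ then gives $G^{\mathrm{original}}=\ln(N!)$, the number of orderings of the delivery sequence.

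Under temporal consolidation, the parcels are grouped into $C$ events. The parcels inside one event are handed over simultaneously, so their internal order is no longer a degree of freedom. The system therefore behaves as $C$ distinguishable events, each occurring once. This is the same reasoning the paper applies to the collection side: one trip per customer regardless of parcel count, which gave $G^{\mathrm{pickup}}=\ln(C^{\mathrm{pickup}}!)$. Applying the multinomial definition of $G$ to $C$ events with unit occupancy yields $W=C!$, so $G^{\mathrm{batched}}=\ln(C!)$.

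I also need to check that no collection-side term appears. Unlike the situation in Proposition~\ref{prop:activity_increase}, batching creates no customer trips, so $G^{\mathrm{pickup}}=\ln(0!)=0$. Hence $G^{\mathrm{total}}=G^{\mathrm{batched}}$.

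For the inequality, write
\[
\ln(N!)-\ln(C!)=\sum_{j=C+1}^{N}\ln j .
\]
Every term has $j\ge 2$, so every term is strictly positive. The difference is therefore zero exactly when $C=N$, which is the case of no consolidation, and strictly positive when $C<N$. Equivalently, $n\mapsto\ln(n!)$ is nondecreasing on the positive integers, strictly for $n\ge1$.

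As a sanity check, Proposition~\ref{prop:scaling} shows the reduction is substantial for large systems: $\ln(N!)-\ln(C!)\approx N\ln N-C\ln C-(N-C)$, which grows with the batching ratio $N/C$.

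The main obstacle is the identification step, not the inequality. The justification that the $N!$ sequence orderings collapse to $C!$ must rest on the model assumption that parcels within one event are indistinguishable in time. A possible objection is the alternative multinomial count $\ln\bigl(N!/\prod_c n_c!\bigr)$, where $n_c$ is the number of parcels in event $c$; this counts assignments of parcels to events rather than orderings of events. I will address this by stating explicitly, as the paper did for collection trips, that the measure counts orderings of distinct movement events. I will also note that under either reading the batched entropy is at most $\ln(N!)$, so the direction of the claim holds regardless.
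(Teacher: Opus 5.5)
Your proposal is correct and follows the paper's proof: you identify the unconsolidated configuration space with the $N!$ orderings of individual deliveries and the batched one with the $C!$ orderings of consolidated events, then conclude from monotonicity of $n \mapsto \ln(n!)$. The telescoping sum $\sum_{j=C+1}^{N}\ln j$ (whose strictness claim needs $C\ge 1$) and your remark that the alternative count $\ln\bigl(N!/\prod_c n_c!\bigr)$ also stays below $\ln(N!)$ add detail but do not change the argument.
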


\begin{proof} The configuration space of a logistics system is determined by the number of possible permutations of its deliveries. In the unconsolidated state, each of the $N$ parcels constitutes an independent delivery, yielding $W=N!$.

Temporal consolidation imposes a synchronization constraint such that multiple parcels are handled as a single delivery event. For the routing engine, the $C$ consolidated events become the new primary deliveries. The number of ways to sequence these events is: 

\begin{equation*} 
W^{\mathrm{batched}} = C! 
\end{equation*} 

Since the natural logarithm is a monotonically increasing function and $C \leq N$, it follows that: 

\begin{equation*} 
\ln(C!) \le \ln(N!) 
\end{equation*} 
Unlike spatial consolidation, which increases system-wide entropy by activating customer movements, temporal consolidation fundamentally reduces the cardinality of the state space without creating new movements. Temporal consolidation can also reduce customer-side entropy when customers combine multiple collection events into fewer trips.
\end{proof}

\begin{corollary}[Customer Temporal Consolidation]
\label{cor:cust_temporal}
Suppose the $C$ delivery events from Proposition \ref{prop:temporal_sink} are distributed across $K$ pickup points with allocation $(c_1,\dots,c_K)$, serving $C^{\mathrm{pickup}}$ unique customers. If customers consolidate their retrievals into $U \leq C^{\mathrm{pickup}}$ collection trips, then the collection-side entropy decreases:
\[
G^{\mathrm{pickup}} = \ln(U!) \leq \ln(C^{\mathrm{pickup}}!).
\]
\end{corollary}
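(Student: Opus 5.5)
The argument follows the proof of Proposition~\ref{prop:temporal_sink}, moved from the delivery side to the collection side, together with the collection-side accounting of Section~\ref{sssec:system_entropy}.

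First I will fix the baseline. Before customers consolidate, the collection side is given by the measure of Section~\ref{sssec:system_entropy}. Each of the $C^{\mathrm{pickup}}$ unique customers makes exactly one independent collection trip, and these trips are unconsolidated. The number of distinct arrangements of collection movements is therefore $C^{\mathrm{pickup}}!$, so $G^{\mathrm{pickup}} = \ln(C^{\mathrm{pickup}}!)$. The allocation $(c_1,\dots,c_K)$ of the $C$ delivery events across the pickup points affects only the delivery-side term. It drops out of the collection-side count, because no customer has a choice of point, in line with the broken symmetry conditions of Section~\ref{sssec:breaking_symmetry}.

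Next I will model customer temporal consolidation the same way Proposition~\ref{prop:temporal_sink} models carrier batching. A synchronization constraint merges several retrievals into a single collection event. For the collection process, the $U$ consolidated trips then become the primary movements, and the number of ways to arrange them is $U!$. This gives $G^{\mathrm{pickup}} = \ln(U!)$.

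The inequality then follows directly. Since $U \le C^{\mathrm{pickup}}$, we have $U! \le C^{\mathrm{pickup}}!$, because $n!$ is nondecreasing on the nonnegative integers. Applying the monotonicity of $\ln$ gives $\ln(U!) \le \ln(C^{\mathrm{pickup}}!)$. Equality holds exactly when $U = C^{\mathrm{pickup}}$, or when $U$ and $C^{\mathrm{pickup}}$ are both at most $1$.

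The main obstacle is the modelling step: justifying that, after merging, the collection-side configuration count is $U!$ and not a multinomial involving the merged groups. I would handle this exactly as in Proposition~\ref{prop:temporal_sink}. A merged trip is a single indivisible event, so it contributes no internal arrangement freedom. I would also note that $U \ge 1$ whenever $C^{\mathrm{pickup}} \ge 1$, so that $\ln(U!)$ is well defined and nonnegative.
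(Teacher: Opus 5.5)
Your proposal is correct and takes the same approach as the paper. The paper's proof is the one-line observation that $U \le C^{\mathrm{pickup}}$ gives the inequality, with monotonicity of $n \mapsto n!$ and of $\ln$ left implicit. Your modelling argument for $G^{\mathrm{pickup}} = \ln(U!)$ goes beyond the paper, which simply takes this as part of the statement. Your remark that $U \ge 1$ is not needed, since $0! = 1$ already makes $\ln(U!)$ well defined and nonnegative when $U = 0$.
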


\begin{proof}
As $U \leq C^{\mathrm{pickup}}$, the inequality trivially holds.
\end{proof}

% \begin{proof}
% \textbf{Proof:} Since $U \leq C^{\text{pickup}}$ and $\ln(\cdot)$ is increasing, $\ln(U!) \leq \ln(C^{\text{pickup}}!)$. \hfill $\Box$
% \end{proof}

\begin{insight}[Creation vs.\ Reduction]
\label{insight:creation_reduction}
Propositions \ref{prop:activity_increase} and \ref{prop:temporal_sink} reveal a fundamental distinction between spatial and temporal consolidation. Spatial consolidation creates entropy and temporal consolidation decreases entropy. 
\end{insight}

\begin{figure}[H]
\centering
\hspace*{-1.5cm}
\begin{tikzpicture}[
    order/.style={circle, fill=blue!70, minimum size=6pt, inner sep=0pt},
    delivery/.style={circle, fill=red!70, minimum size=8pt, inner sep=0pt},
    consolidated delivery/.style={circle, fill=green!60!black, minimum size=11pt, inner sep=0pt},
    scale=0.85,
    every node/.style={scale=0.9}
]

% Left panel: Next-day delivery
\node[font=\bfseries] at (3, 7) {Next-Day Delivery};

% Timeline
\draw[->, thick] (0, 6) -- (6, 6) node[right] {Time};
\foreach \x/\label in {1/Day 1, 2/Day 2, 3/Day 3, 4/Day 4, 5/Day 5} {
    \draw (\x, 5.9) -- (\x, 6.1);
    \node[below, font=\small] at (\x, 5.9) {\label};
}

% Customer A timeline
\node[left] at (0, 5) {Customer A:};
\node[order] at (1, 5) {}; 
\node[delivery] at (2, 5) {};
\draw[->, dashed] (1.08, 5) -- (1.92, 5);

\node[order] at (2.5, 5) {}; 
\node[delivery] at (3.5, 5) {};
\draw[->, dashed] (2.58, 5) -- (3.42, 5);

\node[order] at (4, 5) {}; 
\node[delivery] at (5, 5) {};
\draw[->, dashed] (4.08, 5) -- (4.92, 5);

% Customer B timeline
\node[left] at (0, 4) {Customer B:};
\node[order] at (1.3, 4) {}; 
\node[delivery] at (2.3, 4) {};
\draw[->, dashed] (1.38, 4) -- (2.22, 4);

\node[order] at (3.7, 4) {}; 
\node[delivery] at (4.7, 4) {};
\draw[->, dashed] (3.78, 4) -- (4.62, 4);

% Pickup point timeline
\node[left] at (0, 3) {Pickup Point:};
\node[order] at (1.8, 3) {}; 
\node[delivery] at (2.8, 3) {};
\draw[->, dashed] (1.88, 3) -- (2.72, 3);

\node[order] at (3.2, 3) {}; 
\node[delivery] at (4.2, 3) {};
\draw[->, dashed] (3.28, 3) -- (4.12, 3);

% Summary
\node[draw, thick, fill=red!10, align=center, rounded corners] at (3, 1.5) {
    \textbf{7 delivery events}\\
    High entropy
};

% Right panel: Some-day delivery
\node[font=\bfseries] at (11, 7) {Some-Day Delivery (3-day window)};

% Timeline
\draw[->, thick] (8, 6) -- (14, 6) node[right] {Time};
\foreach \x/\label in {9/Day 1, 10/Day 2, 11/Day 3, 12/Day 4, 13/Day 5} {
    \draw (\x, 5.9) -- (\x, 6.1);
    \node[below, font=\small] at (\x, 5.9) {\label};
}

% Customer A timeline - consolidated
\node[left] at (8, 5) {Customer A:};
\node[order] at (9, 5) {}; 
\node[order] at (10.5, 5) {}; 
\node[order] at (11.5, 5) {}; 
\node[consolidated delivery] at (13, 5) {};
\draw[->, dashed, thick, green!60!black] (9.08, 5) -- (12.86, 5);
\draw[->, dashed, thick, green!60!black] (10.58, 5) -- (12.86, 5);
\draw[->, dashed, thick, green!60!black] (12.08, 5) -- (12.86, 5);

% Customer B timeline - consolidated
\node[left] at (8, 4) {Customer B:};
\node[order] at (9.3, 4) {}; 
\node[order] at (11.7, 4) {}; 
\node[consolidated delivery] at (13, 4) {};
\draw[->, dashed, thick, green!60!black] (9.38, 4) -- (12.86, 4);
\draw[->, dashed, thick, green!60!black] (11.78, 4) -- (12.86, 4);

% Pickup point timeline - consolidated
\node[left] at (8, 3) {Pickup Point:};
\node[order] at (9.8, 3) {}; 
\node[order] at (11.2, 3) {}; 
\node[consolidated delivery] at (12, 3) {};
\draw[->, dashed, thick, green!60!black] (9.88, 3) -- (11.86, 3);
\draw[->, dashed, thick, green!60!black] (11.28, 3) -- (11.86, 3);

% Summary
\node[draw, thick, fill=green!10, align=center, rounded corners] at (11, 1.5) {
    \textbf{3 delivery events}\\
    Lower entropy
};

% Legend
\node[order, label=right:Order placed] at (3, 0) {};
\node[delivery, label=right:Delivery] at (6.5, 0) {};
\node[consolidated delivery, label=right:Consolidated delivery] at (10.5, 0) {};

\end{tikzpicture}
\caption{Temporal consolidation through \textit{some}-day delivery. System entropy is reduced as fewer transport movements are needed for the same deliveries.}
\label{fig:temporal_consolidation}
\end{figure}

\subsection{Generalized Entropy under Delivery Heterogeneity}
\label{subsec:generalized_entropy}

We now extend the entropy definition to accommodate heterogeneous delivery mechanisms frequently observed in last-mile logistics, including failed deliveries, pickup point restrictions, direct-to-pickup flows, and customer trip chaining. Rather than analyzing each operational variant separately, we present a unified approach within which these mechanisms arise as special cases.

\subsubsection{Generalized Entropy}

Consider a logistics system with $N$ distinguishable parcels partitioned into $R$ classes (e.g., home delivery, pickup point delivery). For each class $r$, let $N_r$ denote the number of parcels (where $\sum_{r=1}^R N_r = N$) and let $S_r \subseteq \{1,2,\dots,K\}$ denote the set of eligible pickup points. The fraction of parcels in class $r$ designated for home delivery is denoted by $\eta_r \in [0,1]$. Home delivery attempts fail independently with probability $\alpha \in [0,1]$, and at most $\Lambda \geq 1$ delivery attempts are made before rerouting to a pickup point.

Define $N_r^\mathrm{home} = \eta_r N_r$ as the number of parcels initially attempted via home delivery, and $N_r^\mathrm{pickup} = (1-\eta_r)N_r$ as the number sent directly to pickup points. Successful home deliveries equal $h_r = (1-\alpha^{\Lambda})N_r^\mathrm{home}$, 
while failed deliveries requiring pickup are $f_r = \alpha^{\Lambda}N_r^\mathrm{home}$. Let $p_{r,k}$ denote the number of parcels from class $r$ assigned to pickup point $k \in S_r$, satisfying $\sum_{k \in S_r} p_{r,k} = f_r + N_r^\mathrm{pickup}$, and let $p_k = \sum_{r: k \in S_r} p_{r,k}$ be the total parcels at pickup point $k$. Let $\tau_i$ denote the number of delivery attempts made for parcel $i$, and define the expected number of attempts $\mathbb{E}[\tau] = \frac{1-\alpha^{\Lambda}}{1-\alpha}$. Let $C^{\mathrm{pickup}}$ denote the total number of unique customers assigned to pickup points across all classes.

The total system entropy is defined as:

\begin{equation*}
G^\mathrm{total} =
\underbrace{\sum_{r=1}^{R} \left[
\ln\!\big((N_r^\mathrm{home} \, \mathbb{E}[\tau])!\big)
- N_r^\mathrm{home} \, \mathbb{E}[\ln(\tau!)]
- \ln(h_r!)
- \sum_{k \in S_r} \ln(p_{r,k}!)
\right]}_{G^{\mathrm{delivery}}}
+
\underbrace{\ln(C^{\mathrm{pickup}}!)}_{G^{\mathrm{pickup}}}
\end{equation*}

The delivery-side entropy $G^{\text{delivery}}$ captures the combinatorial complexity of sequencing delivery attempts and distributing parcels across locations. The collection-side entropy $G^{\mathrm{pickup}} = \ln(C^{\mathrm{pickup}}!)$ captures the system-wide movement generated by customers collecting their parcels. Each unique customer makes exactly one collection trip, regardless of how many parcels they retrieve. Because the realized delivery attempt counts $\{\tau_i\}$ are stochastic and not known ex ante, we adopt an expected entropy representation in which realization-dependent quantities are replaced by their expectations. This yields an ex ante structural entropy suitable for planning-stage analysis.

\subsubsection{Special Cases and Limiting Regimes}\label{ssec:special_cases}

The generalized entropy formulation subsumes several operational scenarios frequently studied in last-mile logistics. By appropriate parameter selection, previously defined delivery structures arise as special cases of the unified framework.

\medskip

\noindent\textbf{Baseline (no failures, full home delivery).}
When delivery failures are absent ($\alpha = 0$) and all parcels are designated for home delivery ($\eta_r = 1$ for all classes), each parcel requires exactly one delivery attempt and no pickup allocations occur. With $C^{\text{pickup}} = 0$, the collection-side entropy vanishes. Consequently,
\[
G^{\mathrm{total}} = \ln(N!),
\]
the pure home delivery baseline.

\medskip

\noindent\textbf{Single-attempt failure model.}
Setting $\Lambda = 1$ and $\eta_r = 1$ yields a regime in which every parcel receives at most one delivery attempt before rerouting to pickup. Failed parcels generate $C^{\mathrm{pickup}}$ unique customers at pickup points. Under symmetric allocation across $K$ pickup points, the total entropy becomes
\[
G^{\mathrm{total}} = \ln\!\big([N(1+\alpha)]!\big) 
- 2K \ln\!\big([\alpha N/K]!\big)
+ \ln(C^{\mathrm{pickup}}!).
\]

\medskip

\noindent\textbf{Multiple-attempt failure model.}
Allowing multiple delivery attempts ($\Lambda > 1$) introduces additional heterogeneity through the attempt counts $\{\tau_i\}$. Assuming symmetric pickup eligibility ($|S_r| = K$), the total entropy becomes
\[
G^{\mathrm{total}} = \ln\!\big([N\mathbb{E}[\tau]]!\big)
- \mathbb{E}[\ln(\tau!)]
- 2K \ln\!\big([N\alpha^{\Lambda}/K]!\big)
+ \ln(C^{\mathrm{pickup}}!),
\]
highlighting how delivery retries increase structural complexity through expanded attempt permutations.

\medskip

\noindent\textbf{Direct-to-pickup deliveries.}
When all parcels are sent directly to pickup points ($\eta_r = 0$), delivery entropy reduces to the multinomial allocation of distinguishable parcels across eligible pickup locations. With all $C^{\text{total}}$ customers collecting from pickup points, the total entropy simplifies to
\[
G^{\mathrm{total}} 
= \sum_r \left[\ln(N_r!) - \sum_{k\in S_r}\ln(p_{r,k}!)\right]
+ \ln(C^{\mathrm{total}}!).
\]

\medskip

\noindent\textbf{Single-attempt mixed delivery.}
When $\Lambda = 1$ but both home delivery and pickup routing are allowed ($\eta_r \in (0,1]$), entropy reflects the combinatorial split between successful home deliveries and parcels routed to pickup locations:
\[
G^{\mathrm{total}} 
= \sum_r \left[\ln(N_r!) - \ln(h_r!) - \sum_{k\in S_r}\ln(p_{r,k}!)\right]
+ \ln(C^{\mathrm{pickup}}!).
\]

\medskip

\noindent\textbf{Fully heterogeneous regime.}
For general $\eta_r$ and $\Lambda$, the full formulation as defined in Section~\ref{ssec:special_cases} applies. In this regime, structural entropy captures the combined effects of delivery retries, outcome heterogeneity, routing restrictions, and customer-side collection activity within a single unified combinatorial framework.

\subsection{Interpretation of different regimes on the level of entropy}
\label{ssec:interpretation_regimes}

The unified formulation reveals how common last-mile practices influence structural entropy through their effect on the underlying configuration space and the system-wide movement pattern.

\begin{itemize}
\item \emph{Failed deliveries} introduce additional branching in delivery outcomes, increasing structural dispersion by expanding the space of feasible delivery sequences. They also increase collection-side entropy by routing additional customers to pickup points, raising $C^{\text{pickup}}$.

\item \emph{Pickup eligibility restrictions} reduce delivery-side entropy by shrinking the feasible assignment sets $S_r$. Limiting the number of admissible pickup locations reduces the number of combinatorially distinct parcel-to-location configurations.

\item \emph{Direct-to-pickup flows} modify entropy through deterministic channel allocation. While direct routing to pickup points bypasses possible failed deliveries, it activates collection trips for all affected customers, increasing $G^{\mathrm{pickup}}$.

\item \emph{Trip chaining} reduces collection-side entropy by lowering the effective number of independent collection trips. When parcel retrieval is coupled to pre-existing trips (e.g., a trip to the grocery store), the customer does not generate an additional movement, reducing $C^{\text{pickup}}$ in the collection entropy. Theoretically speaking, trip chaining of \textit{all} collection trips, would maximally reduce system entropy (i.e., $C^\mathrm{pickup}=0$). Partial trip chaining provides a spectrum between entropy reduction, conservation, and increase (i.e., for some $C^\mathrm{chained} < C^\mathrm{pickup}$ entropy is conserved).
\end{itemize}

Although these mechanisms differ operationally, they act through a common structure: each modifies either the delivery-side configuration space, the collection-side movement count, or both. We now establish that total structural entropy grows linearly with system size under fixed structural proportions.

\begin{proposition}[Linear entropy scaling]
\label{prop:linear_scaling}
Suppose that as $N \to \infty$:

\begin{itemize}
\item class proportions $a_r = N_r/N$ remain fixed,
\item delivery outcome shares $h_r/N_r$, $f_r/N_r$, and $N_r^{(P)}/N_r$ remain fixed,
\item allocation shares $p_{r,k}/N$ remain fixed,
\item the ratio $C^{\text{pickup}}/N$ remains fixed.
\end{itemize}

Then total entropy satisfies
\[
G^{\mathrm{total}} = N \cdot c(\Theta) + \mathcal{O}(\ln N),
\]
where $\Theta$ denotes the collection of structural parameters,
and $c(\Theta)$ is a finite constant.
\end{proposition}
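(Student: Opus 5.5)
The plan is to reduce every term of $G^{\mathrm{total}}$ to a log-factorial of a quantity that is a fixed multiple of $N$. Each such term will then be expanded with a Stirling approximation whose error is explicitly controlled, so the proof is a refinement of the proof of Proposition~\ref{prop:scaling}. Under the hypotheses, every argument has the form $x_j N$ with $x_j$ constant in $N$:
\begin{itemize}
\item $N_r^{\mathrm{home}}\mathbb{E}[\tau] = \eta_r a_r \mathbb{E}[\tau] N$, where $\mathbb{E}[\tau]$ depends only on $(\alpha,\Lambda)$;
\item $h_r = (1-\alpha^\Lambda)\eta_r a_r N$;
\item $p_{r,k} = (p_{r,k}/N)\, N$;
\item $C^{\mathrm{pickup}} = (C^{\mathrm{pickup}}/N)\, N$.
\end{itemize}
The term $N_r^{\mathrm{home}}\mathbb{E}[\ln(\tau!)]$ is already exactly linear in $N$, because the law of $\tau$ is fixed. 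For any fixed $x>0$ I would use
\[
\ln\big((xN)!\big) = xN\ln N + xN\ln x - xN + \mathcal{O}(\ln N),
\]
which is Stirling's formula with the $\tfrac12\ln(2\pi n)$ correction absorbed into the remainder. Terms with $x=0$ contribute $\ln 0! = 0$. Non-integer arguments are read through $\Gamma$, with the same expansion. There are finitely many terms ($R$ classes, at most $K$ points each), so the remainders sum to $\mathcal{O}(\ln N)$.

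Substituting, I would collect three groups:
\begin{itemize}
\item an $N\ln N$ group, with coefficient $\kappa=\sum_j \sigma_j x_j$, where $\sigma_j=\pm1$ is the sign of term $j$;
\item a linear group $N\big[\sum_j \sigma_j (x_j\ln x_j - x_j) - \sum_r \eta_r a_r \mathbb{E}[\ln(\tau!)]\big]$;
\item the $\mathcal{O}(\ln N)$ remainder.
\end{itemize}
I would define $c(\Theta)$ as the bracket in the linear group. It is finite because each $x\ln x$ is finite for $x\ge 0$, with the convention $0\ln 0=0$.

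The main obstacle is the leading group: the statement holds exactly as worded if and only if $\kappa=0$. I would attempt the cancellation with the bookkeeping identities $\sum_{k\in S_r}p_{r,k}=f_r+N_r^{\mathrm{pickup}}$ and $h_r+f_r=N_r^{\mathrm{home}}$. These make the parts of each class sum to the whole, mirroring the step "since $\sum p_k=N$" in the proof of Proposition~\ref{prop:scaling}. The same identities also make the $-x_jN$ pieces cancel except for a leftover linear term. After these cancellations,
\[
\kappa = \sum_r \big(\eta_r a_r(\mathbb{E}[\tau]-1) - (1-\eta_r)a_r\big) + \frac{C^{\mathrm{pickup}}}{N}.
\]
I therefore expect a gap here that I do not see how to close. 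The identities remove the multinomial part, but the retry excess $\mathbb{E}[\tau]-1$, the direct-to-pickup term and the collection term $\ln(C^{\mathrm{pickup}}!)$ carry no compensating factorial. In the listed hypotheses I find nothing that forces this residue to vanish in general. It is zero in the pure home-delivery baseline. It is also zero, for instance, when $\alpha=0$, every $\eta_r=1$ and $C^{\mathrm{pickup}}=0$.

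So as the statement is worded, my plan establishes $G^{\mathrm{total}} = \kappa N\ln N + N c(\Theta) + \mathcal{O}(\ln N)$, with $\kappa$ the explicit expression above. The claimed form $G^{\mathrm{total}} = N c(\Theta) + \mathcal{O}(\ln N)$ follows only on parameter sets with $\kappa=0$. I would not present the stated form as proved without first adding a hypothesis that gives $\kappa=0$. The alternative is to weaken the conclusion to the displayed expansion, which is all the argument actually delivers.
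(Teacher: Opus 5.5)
Your analysis is correct. It follows the same route as the paper's proof, which applies $\ln(n!)=n\ln n-n+\mathcal{O}(\ln n)$ to every factorial term and then simply asserts that ``the leading $N\ln N$ terms combine to produce a linear scaling in $N$.'' That assertion is exactly the step you checked. The paper never computes the coefficient, and yours is right:
\[
\kappa=\sum_r\bigl(\eta_r a_r(\mathbb{E}[\tau]-1)-(1-\eta_r)a_r\bigr)+\frac{C^{\mathrm{pickup}}}{N}.
\]
So the gap lies in the paper's argument and in the statement as worded, not in your reasoning.

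Your expansion $G^{\mathrm{total}}=\kappa N\ln N+N c(\Theta)+\mathcal{O}(\ln N)$ is rigorous. You can therefore strengthen ``I do not see how to close it'' to ``it cannot be closed'': every admissible parameter choice with $\kappa\neq 0$ is a counterexample. A natural one is the following:
\begin{itemize}
\item $R=1$, $\eta_1=1$, $\Lambda=1$ (so $\tau\equiv 1$), and a fixed $\alpha\in(0,1)$;
\item a single pickup point;
\item one customer per failed parcel, $C^{\mathrm{pickup}}=f_1=\alpha N$, taken along $N$ with $\alpha N$ integral.
\end{itemize}
Every listed hypothesis holds, and the pickup-allocation term cancels the collection term. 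This leaves
\[
G^{\mathrm{total}}=\ln(N!)-\ln\bigl(((1-\alpha)N)!\bigr)=\alpha N\ln N+\mathcal{O}(N),
\]
which is not of the form $Nc+\mathcal{O}(\ln N)$.

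Two remarks reinforce your conclusion.

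First, the paper's own baseline special case states $G^{\mathrm{total}}=\ln(N!)$, which grows like $N\ln N$. The general formula instead gives $0$ there, because $\ln(N_r!)$ and $\ln(h_r!)$ cancel when $h_r=N_r$; this is why you found $\kappa=0$ in that regime. The general formula and its claimed special cases are therefore inconsistent. Under either reading, the proposition fails somewhere in the admissible range.

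Second, a correct version needs an added hypothesis forcing $\kappa=0$. For example, with $\alpha=0$ you get $\kappa=\bigl(C^{\mathrm{pickup}}-\sum_r N_r^{\mathrm{pickup}}\bigr)/N$. Linear scaling then holds exactly when each directly routed pickup parcel belongs to a distinct customer. Absent such a hypothesis, the two-term expansion you derived is the most that can be claimed.
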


\begin{proof}
Applying Stirling's approximation $\ln(n!) = n\ln n - n + \mathcal{O}(\ln n)$
to each factorial term in the entropy expression---including $\ln(C^{\text{pickup}}!)$---yields contributions of order $n\ln n$. Under fixed proportion assumptions, the leading $N\ln N$ terms combine to produce a linear scaling in $N$, while logarithmic remainders accumulate to $\mathcal{O}(\ln N)$.
\end{proof}

Proposition~\ref{prop:linear_scaling} shows that structural dispersion grows proportionally with system scale under stable operational shares. Different delivery practices therefore modify the coefficient $c(\Theta)$ without changing the asymptotic order of growth.

\section{Case Study: Empirical Validation using Amazon Routing Data} \label{sec:case_study}

This section evaluates the proposed entropy framework using the Amazon Last-Mile Routing Research Challenge dataset \citep{amazon}. Comprising 6,112 high-resolution routes across five heterogeneous U.S. metropolitan areas---Seattle, Los Angeles, Chicago, Boston, and Austin---this dataset provides a representative cross-section of modern, high-density last-mile operations. By leveraging actual sequence data and package characteristics, we move beyond theoretical constructs to quantify the complexity inherent in current delivery practices.

To assess the utility of structural entropy as a diagnostic instrument, our experimental design employs a three-stage analytical pipeline:

\begin{enumerate} 
\item \textbf{Baseline Characterization (Section~\ref{ssec:current_state_entropy}):} We establish an empirical benchmark of the as-is state, quantifying the prevailing levels of fragmentation and demand balance across different urban topologies. 
\item \textbf{Counterfactual Scenario Analysis (Section~\ref{ssec:spatial_consolidation_analysis}):} Using Seattle as a focal geography, we simulate spatial consolidation via pickup-point activation. We apply a sigmoid-based behavioral model to test the sensitivity of system entropy to varying levels of customer adoption. \item \textbf{Operational Cross-Validation (Section~\ref{ssec:correlations}):} We perform a correlation analysis between normalized entropy and traditional performance indicators. Specifically, we investigate the existence of a stable scaling relationship linking entropy to total route distance. 
\end{enumerate}

\subsection{The Current State of Entropy}\label{ssec:current_state_entropy}

To establish an empirical baseline, we characterize the $6,112$ routes across three primary dimensions: volume ($N$), stop density ($K$), and the resulting entropy profiles ($G, H$). We normalize structural entropy as $G^{\mathrm{norm}} = G / \ln(N!)$ and Shannon entropy as $H^{\mathrm{norm}} = H / \ln(K)$. Since the baseline consists entirely of home deliveries ($C^{\mathrm{pickup}} = 0$), the collection-side entropy is zero and total system entropy equals delivery-side entropy. Table~\ref{tab:entropy_statistics} summarizes these metrics, revealing a network dominated by systematic fragmentation and high demand uniformity.

\begin{table}[ht]
\centering
\caption{Entropy and fragmentation statistics across 6,112 Amazon last-mile routes. Since the baseline consists of home deliveries only, $G^{\mathrm{pickup}} = 0$ and total entropy equals delivery entropy. The current system is characterized by high fragmentation (high $G$) and high demand uniformity (high $H$).}
\label{tab:entropy_statistics}
\begin{tabular}{lrrrr}
\toprule
\textbf{Metric} & \textbf{Mean} & \textbf{Std Dev} & \textbf{Min} & \textbf{Max} \\
\midrule
$N$ & 238.40 & 30.98 & 150.00 & 304.00 \\
$K$ & 147.99 & 31.03 & 33.00 & 238.00 \\
$N/K$ & 1.68 & 0.42 & 1.00 & 113.00 \\
\midrule
$G$ & 978.77 & 161.45 & 436.05 & 1356.82 \\
$G^{\text{norm}}$ & 0.91 & 0.04 & 0.62 & 0.98 \\
\midrule
$H$ & 4.78 & 0.30 & 3.00 & 5.39 \\
$H^{\text{norm}}$ & 0.96 & 0.02 & 0.71 & 0.99 \\
\bottomrule
\end{tabular}
\end{table}

With an average consolidation ratio of $1.68$ parcels per stop, $65\%$ of all stops serve a single parcel. This granularity is quantified by a mean $G^{\text{norm}}$ of $0.91$. The narrow standard deviation ($\sigma = 0.04$) suggests that this high-disorder state is a consistent operational characteristic across all five metropolitan areas. Furthermore, the high mean Shannon entropy ($H^{\text{norm}} = 0.96$) confirms that demand is distributed near-uniformly across stops. Only $1.6\%$ of stops exceed five parcels, typically representing commercial hubs or pickup points. As illustrated in Figure~\ref{fig:structural_entropy}, the tight clustering in the high-$G$, high-$H$ quadrant (from Table \ref{tab:entropy_quadrants}) represents a fragmentation trap: a balanced but operationally complex state where the carrier faces near-maximum combinatorial sequencing challenges. However, the observed range of $G^{\mathrm{norm}}$ (down to $0.62$) proves that significant consolidation is achievable, providing a quantitative basis for identifying best-in-class routing practices within the existing infrastructure.

\begin{figure}
    \centering
    \includegraphics[width=1.0\linewidth]{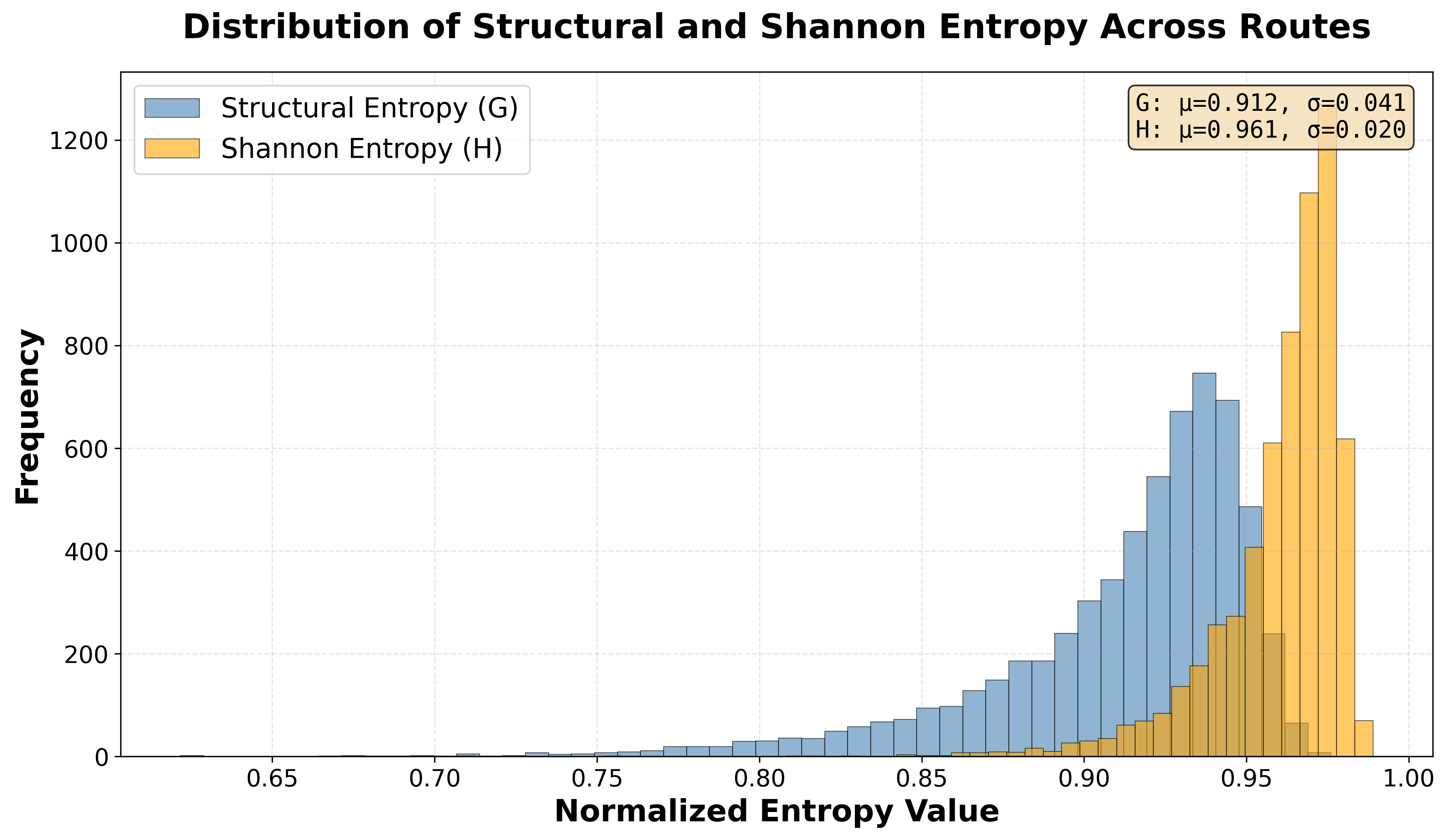}
    \caption{Distribution of Structural (Boltzmann) and Shannon entropy across routes. Table~\ref{tab:entropy_quadrants} classifies this entropy profile as high dispersion (High $G$) and balanced yet fragmented demand (High $H$). However, the lower values of $G$ do reveal significantly more consolidated routes.}
    \label{fig:structural_entropy}
\end{figure}

\subsection{Counterfactual Analysis: Spatial Consolidation and Behavioral Sensitivity}
\label{ssec:spatial_consolidation_analysis}
Since the Amazon dataset reflects current operations, the extent of existing consolidation remains latent. While high-volume stops (e.g., 6 parcels) may hint at temporal batching or commercial density, they often represent single-customer bulk orders. To explore the potential for structural change, we perform a counterfactual enrichment using the city of Seattle as a focal geography. By integrating pickup point locations via the Google Places API, we first identify that 2.6\% of current stops coincide with known consolidation hubs, accounting for 4.75\% of total parcel volume. In the remainder of this section, we assume these consolidation hubs as pickup points.

\subsubsection{Customer Activation Modeling}To quantify the latent potential for spatial consolidation, we model customer adoption as a stochastic process driven by proximity. Analysis of the Seattle network reveals a mean distance of $874$m ($\sigma = 740$m) to the nearest pickup point. We characterize customer willingness to adopt pickup delivery using a reversed sigmoid activation function, $P_a(d, \beta, t)$, representing the probability of a customer at distance $d$ choosing consolidation:

\begin{equation*}
P_a(d, \beta, t) = \frac{1}{1 + e^{\beta(d - t)}}
\end{equation*}

where $t$ is the activation threshold (the distance at which adoption probability is $50\%$) and $\beta$ is the steepness parameter, reflecting sensitivity to distance. This behavioral-operational link allows us to separate the \textit{activated population} ($A$)---those within a behavioral sweet spot---from the \textit{actual adopters} ($\lambda \cdot A$), where $\lambda$ represents the final acceptance ratio (see Appendix~\ref{numerical_results} for details).

\subsubsection{Impact on Delivery-Side Entropy}

The entropy metric $G^{\mathrm{norm}}$ reported in this section reflects delivery-side entropy as defined in Equation~\eqref{eq:delivery_entropy}, normalized by the home delivery baseline $\ln(N!)$. We focus on the carrier's perspective because the collection-side entropy is fully determined by the number of adopting customers and carries no additional structural information beyond the adoption count itself. Hence, delivery-side entropy therefore isolates the consolidation effect that drives operational outcomes, while the collection-side entropy can be easily reconstructed. 

Table \ref{tab:summary_activation} summarizes the outcomes across different behavioral regimes where  $G^{\mathrm{norm}}$ denotes the entropy of the counterfactual operations (full sensitivity results are provided in Appendix \ref{numerical_results}).

\begin{table}[ht]
\centering
\caption{Summary of System Entropy Sensitivity to Spatial Consolidation (Seattle). $G^{\text{norm}}$ includes only the delivery-side, normalized by $\ln(N!)$. Baseline $G^{\mathrm{norm}} = 0.912$. $t$ in kilometers.}
\label{tab:summary_activation}
\begin{tabular}{lccc}
\toprule
\textbf{Consolidation Regime} & \textbf{Parcel \% Shifted} & $\mathbf{G^{\mathrm{norm}}}$ & \textbf{Reduction} \\
\midrule
Baseline (Current State)      & 4.75\%           & 0.912          & --           \\ \addlinespace
Conservative ($t=0.25$)   & 4.8 -- 38.0\%    & 0.906 -- 0.762 & 0.7 -- 16.4\%  \\ \addlinespace
Moderate ($t=0.5$)        & 9.7 -- 43.3\%    & 0.894 -- 0.729 & 2.0 -- 20.1\% \\ \addlinespace
Aggressive ($t=1.0$)      & 13.8 -- 70.4\%   & 0.880 -- 0.547 & 3.5 -- 40.0\% \\
\bottomrule
\end{tabular}
\end{table}

The results demonstrate that delivery-side entropy reduction is non-linear and requires a critical mass of adoption. Moving less than $5\%$ of parcels to pickup points yields a modest ${\sim}0.6$--$0.8\%$ reduction in $G^{\text{norm}}$, whereas $10\%$ consolidation triggers approximately a $2\%$ improvement. Operationally, a target $G^{\text{norm}}$ of $0.75$--$0.80$ (an $11$--$18\%$ reduction) represents a challenging, yet feasible frontier for carriers, requiring a spatial consolidation of $28$--$42\%$. Reaching more ambitious targets (e.g., $G^{\text{norm}} < 0.70$) would necessitate consolidating over $50\%$ of parcels, implying a structural shift in pickup point density and additional customer incentives to adopt spatial consolidation. However, as established in Proposition~\ref{prop:activity_increase}, these delivery-side gains do not come free at the system level: each adopting customer generates a collection trip that did not previously exist, and total system entropy increases with the degree of spatial consolidation. Temporal consolidation remains the only mechanism that reduces system entropy simultaneously, as asserted in Corollary~\ref{cor:cust_temporal}.

\subsection{Operational Cross-Validation Against Other Metrics}\label{ssec:correlations}
To establish the decision-relevance of structural entropy ($G^{\mathrm{norm}}$), we examine its relationship with traditional logistics performance indicators. As expected from the mathematical definition, $G^{\mathrm{norm}}$ exhibits a near-perfect negative correlation with the standard deviation of parcels per stop (correlation coefficient $\rho = -0.951$, p-value $p^{\mathrm{val}} < 0.001$), confirming that entropy accurately captures the transition from lumpy consolidation to fragmented uniformity. Additionally, under a uniform parcel distribution, the relationship between the stop count $K$ and entropy $G$ is given by $G = \ln(N!) - K\ln(\frac{N}{K}!) \approx N \ln(K)$ (via Stirling). It is easily verifiable that two routes with identical $N$ and $K$ can have any $G$ value in the range $0 \leq G\leq N\ln(K)$, depending on the allocation vector $(p_1, ..., p_k)$. Hence, $G$ encodes the distribution information that $K$ discards.

More critically, entropy provides a structural explanation for route inefficiency that volume alone cannot provide. We find a moderately strong correlation between entropy and route compactness (the inverse of mean inter-stop distance; $\rho = -0.632, p^{\mathrm{val}}  < 0.001$) and total route distance ($\rho = 0.697, p^{\mathrm{val}}  < 0.001$). Notably, the number of parcels per route ($N$) shows a negligible correlation with total distance ($\rho = 0.092, p^{\mathrm{val}}  < 0.001$), suggesting that the spatial arrangement and consolidation state of the demand---quantified by entropy---is the primary driver of route length.

As illustrated in Figure~\ref{fig:entropy_vs_distance}, the relationship between normalized structural entropy and total route distance $d$ follows a non-linear scaling relationship:

\begin{equation*}
d = \kappa \cdot \frac{G^{\mathrm{norm}}}{1 - G^{\mathrm{norm}}}
\end{equation*}

where $\kappa$ is a scaling constant. Across all five metropolitan areas, $\kappa$ exhibits remarkable stability, with a mean $\bar{k} = 20.8$ km and a narrow range between $18.8$ and $24.6$ km. Further regression analysis indicates that depot proximity and local road topology do not significantly explain the variance in $\kappa$ ($R^2 < 0.03$).

The consistency of $\kappa$ across diverse urban environments suggests it functions as a network-inherent scaling factor. This finding validates $G^{\mathrm{norm}}$ as a predictive KPI: as entropy approaches the theoretical maximum ($G^{\mathrm{norm}} \to 1$), the marginal distance cost of adding fragmented demand accelerates asymptotically.

\begin{figure}[ht!]\centering\includegraphics[width=0.8\linewidth]{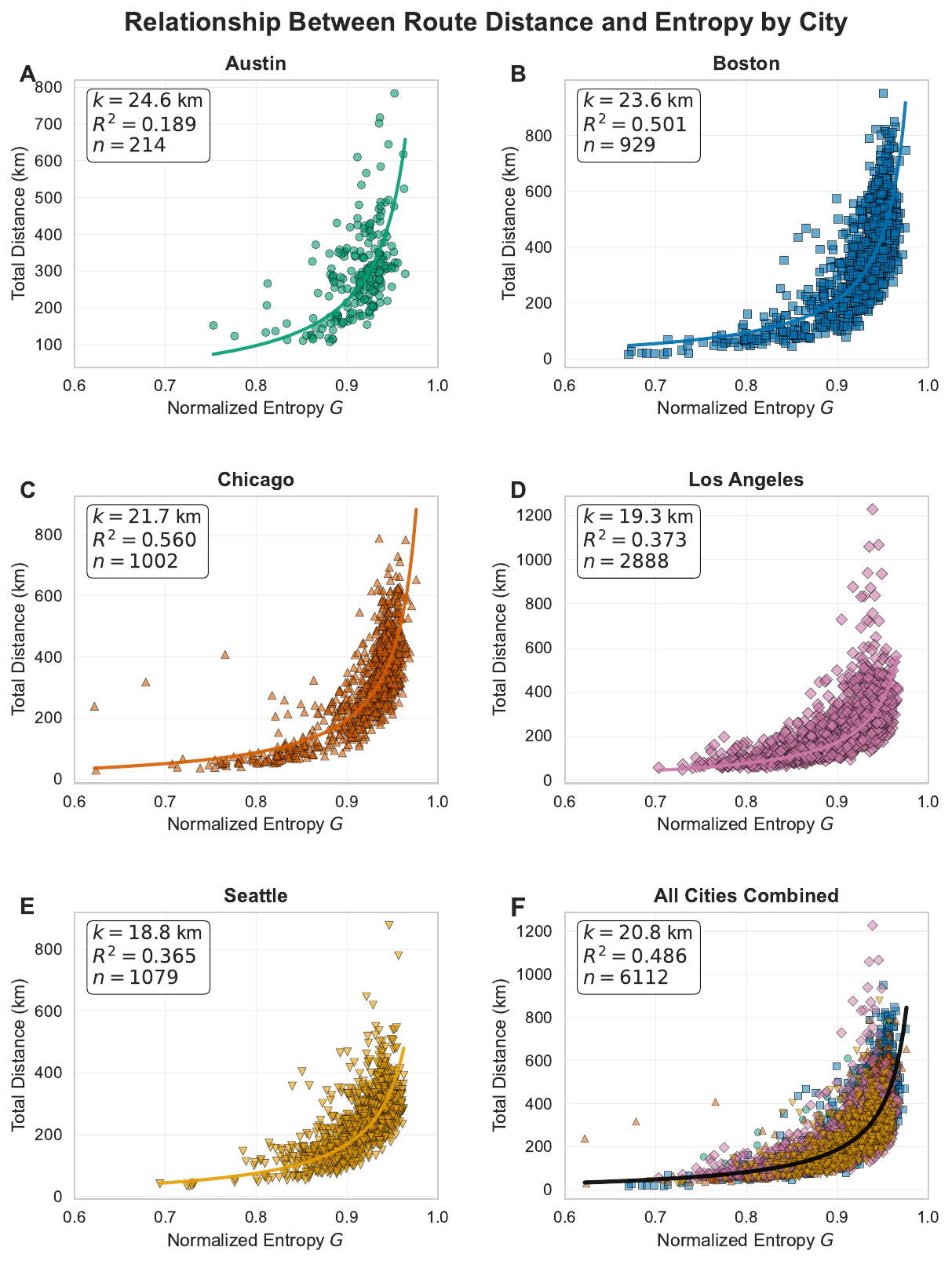}
\caption{Empirical validation of the entropy-distance scaling relationship across 6,112 Amazo delivery routes. Panels A--E present city-specific regressions for Seattle, Los Angeles, Chicago, Boston, and Austin, demonstrating the consistency of the scaling factor $\kappa$. Panel F shows the aggregate dataset fit ($\kappa = 20.8$ km). The asymptotic behavior as $G^{\mathrm{norm}} \to 1$ quantifies the extreme distance penalty associated with total system fragmentation.}\label{fig:entropy_vs_distance}
\end{figure}

\section{Managerial Implications and Discussion}
\label{sec:discussion}

The entropy measure introduced in this study captures a system property orthogonal to traditional cost and service metrics: the structural complexity inherent in fragmented demand. Whereas routing optimization focuses on how to serve a given set of delivery points efficiently, entropy explains why certain demand configurations create unavoidable inefficiency. This distinction shifts managerial attention from purely operational optimization toward structural intervention.

\subsection{Implications for Carriers}

For carriers, entropy functions as a forward-looking diagnostic rather than an ex post performance indicator. The scaling relation established in Section~\ref{ssec:correlations} ($d \propto \kappa \cdot \frac{G}{1-G}$) links structural disorder directly to expected route mileage, allowing managers to quantify the distance penalty associated with demand fragmentation before vehicles are dispatched. The empirical stability of $\kappa \approx 20.8$ km across five cities suggests that entropy provides a robust, transferable indicator of structural routing difficulty.

Our baseline analysis (Section~\ref{ssec:current_state_entropy}) exemplifies that contemporary last-mile logistics operations reside in a high-fragmentation regime ($G^{\mathrm{norm}} = 0.912$), where incremental routing optimization yields diminishing returns. At this level of disorder, efficiency gains require structural change rather than algorithmic refinement. Two complementary strategies emerge.

First, entropy can be reduced temporally by consolidating demand over time, for example through scheduled delivery days (e.g., Amazon Day on a chosen day of the week). Temporal aggregation lowers disorder at the source by reshaping the arrival process of orders, reducing entropy for all system actors simultaneously.

Second, entropy can be managed spatially through the use of lockers, hubs, or neighborhood consolidation points. As demonstrated in the Seattle simulation, spatial consolidation reduces the carrier's delivery-side entropy by redirecting parcels to shared service points. However, the system-wide entropy measure reveals that this private gain comes at a system-level cost: each consolidated parcel generates a customer collection trip, and total system entropy may increase (Proposition~\ref{prop:activity_increase}). Carriers should therefore recognize spatial consolidation as an entropy transfer mechanism---beneficial to the carrier, but not costless to the system---and evaluate it in conjunction with temporal strategies that achieve genuine system-wide entropy reduction.

Embedding entropy into decision-support systems enables three concrete applications. Entropy provides a reliability signal: routes approaching maximal disorder ($G^{\mathrm{norm}} \to 1$) are inherently brittle and sensitive to disruption, enabling proactive targeting of consolidation incentives. Because normalized entropy is scale-invariant, it also enables strategic benchmarking across heterogeneous urban environments, helping managers distinguish between poor execution and structurally difficult markets. Finally, the generalized entropy measure decomposes dispersion into distinct mechanisms---failed deliveries, pickup eligibility design, and direct-to-pickup routing (Section~\ref{ssec:special_cases})---allowing targeted interventions such as improving first-attempt success rates, redesigning eligibility regions, or adjusting pickup density rather than relying solely on route optimization.

\subsection{Policy Implications: Managing Urban Logistics Complexity}

From a municipal perspective, entropy reframes urban logistics challenges by targeting the structural drivers of congestion rather than downstream symptoms such as vehicle counts. High-entropy delivery systems require disproportionately more road-time per parcel due to the non-linear scaling identified in Section~\ref{ssec:correlations}, amplifying environmental and social externalities.

Entropy therefore offers a technology-neutral regulatory metric. Instead of prescriptive restrictions (e.g., vehicle bans), municipalities could introduce entropy-based access thresholds in dense urban areas. According to Table~\ref{tab:entropy_quadrants}, carriers operating with high structural entropy signal highly fragmented operations and correspondingly high externality exposure. Performance-based pricing schemes could link congestion charges to structural disorder, rewarding carriers that reduce entropy through consolidation, collaboration, or infrastructure investment. 

The system-wide entropy perspective is particularly relevant for policy design. Policies promoting spatial consolidation should account for the induced customer mobility: redirecting deliveries to pickup points reduces carrier entropy but generates collection trips that contribute to pedestrian traffic and local congestion around service points. The total system entropy may increase even as the carrier's private entropy decreases. Effective policy should therefore prioritize temporal consolidation---which reduces entropy for all actors, see Corollary~\ref{cor:cust_temporal}---and evaluate spatial consolidation through the lens of total system entropy rather than carrier-side entropy alone.

The generalized entropy decomposition further enables regulators to distinguish between structural and operational sources of dispersion. Policies targeting first-attempt delivery success reduce entropy by eliminating redundant delivery cycles without restricting economic activity. Similarly, coordinated pickup zoning and infrastructure placement can lower spatial entropy by enabling trip chaining---which reduces collection-side entropy by lowering $C^{\mathrm{pickup}}$---and reducing isolated delivery nodes.

\subsection{Consumer Implications: Redefining the Convenience Frontier}

From the consumer perspective, entropy highlights a fundamental trade-off between individual convenience and collective urban livability. Unconstrained home delivery maintains a high-entropy equilibrium on the delivery side, effectively externalizing logistics complexity into congestion, emissions, and neighborhood disturbance. However, from the consumer's perspective, it imposes zero collection burden.

Our counterfactual analysis (Section~\ref{ssec:spatial_consolidation_analysis}) shows that transitions toward lower carrier entropy depend strongly on distance sensitivity. The sigmoid activation model identifies a behavioral threshold beyond which spatial consolidation becomes operationally viable. Communicating this threshold reframes consumers as active participants in system-level efficiency: proximity to consolidation infrastructure and customer willingness to accept out-of-home delivery options directly determines the system's ability to reduce carrier travel distance.

The system-wide entropy measure adds an important nuance. When a consumer opts for pickup delivery, they reduce the carrier's entropy contribution but generate a collection trip that increases total system entropy. The net system benefit depends on the relative externalities of these movements: a delivery van navigating a congested urban core imposes different costs than a customer walking 500 meters to a locker. This asymmetry suggests that spatial consolidation is most beneficial in contexts where carrier trips carry disproportionate externalities relative to customer collection trips---for instance, in dense city centers where van access is restricted or emissions regulations apply.

Entropy also enables new incentive mechanisms. By quantifying the marginal structural disorder associated with a specific delivery request, carriers can design pricing schemes that internalize externalities for consumers. High-entropy requests---such as isolated home deliveries or narrow delivery windows---may carry higher prices reflecting their systemic impact, whereas lower-entropy options, including green delivery slots or neighborhood pickup hubs, can be incentivized. Making structural complexity visible allows consumers to balance personal convenience with measurable community benefits, redefining the convenience--externality frontier.

\section{Conclusion}
\label{sec:conclusion}

This paper introduces structural entropy (derived from Boltzmann mechanics) as a quantitative measure of dispersion in last-mile logistics systems. Rather than evaluating routes solely through realized costs or distance, the framework characterizes the combinatorial configuration space induced by parcel allocations and delivery mechanisms. By linking structural dispersion to routing distance through a scaling relationship, the analysis connects demand fragmentation to measurable operational consequences.

Our results yield three primary contributions to the logistics literature:

\begin{enumerate}
\item \textbf{A Predictive Scaling Relationship:} We show that route distance is governed not only by volume but by the dispersion structure of deliveries. Across five heterogeneous U.S. cities, route length exhibits a stable non-linear relationship with structural entropy, with an estimated scaling constant ($\kappa \approx 20.8$ km). This empirical regularity suggests that dispersion captures a persistent structural feature of last-mile networks.

\item \textbf{Evidence of High-Dispersion Operations:} An analysis of 6,112 Amazon routes indicates that current last-mile systems operate near highly fragmented configurations ($G^{\mathrm{norm}} \approx 0.91$). In such regimes, incremental routing improvements yield limited gains, while structural changes in delivery configuration (e.g., strategically placed parcel lockers, Amazon Day for temporally consolidated deliveries) may offer larger performance effects.

\item \textbf{A System-Wide View of Consolidation:} We establish that spatial consolidation under idealized symmetric conditions conserves structural entropy---a theoretical result that reveals the mechanism by which service points transfer complexity across the firm boundary. Both symmetry conditions (unconstrained customer choice and indistinguishable parcels) are violated in practice. Our system-wide entropy measure, which accounts for all movements by both carriers and customers, shows that spatial consolidation increases total system entropy by activating customer collection trips that did not previously exist. However, when customers combine parcel collection with pre-existing trips---a behavior known as \textit{trip chaining}---the collection-side entropy is reduced, since chained trips do not constitute additional independent movements. Depending on the degree of trip chaining, system entropy may still increase (no chaining), may be conserved (partial chaining), or genuinely decrease (full chaining), making trip chaining a critical determinant of whether spatial consolidation achieves system-wide efficiency gains. Temporal consolidation, by contrast, always reduces entropy for all actors by decreasing the number of delivery events. This distinction provides a principled basis for evaluating logistics interventions at the system level.
\end{enumerate}

Several directions for future research follow naturally. First, incorporating dispersion measures into vehicle routing formulations may yield solutions that are robust to fragmentation rather than solely distance-optimal ex post. Second, empirical validation in other geographical settings (e.g., rural areas, cities in Europe and Asia) would test the stability of the estimated scaling relationship across urban morphologies. Third, the distributional implications of consolidation policies warrant further investigation, particularly if incentives to reduce dispersion shift retrieval effort toward specific customer groups. Fourth, a weighted entropy that differentiates between movement types and transport modes by their social and environmental cost would enable a complete welfare analysis of spatial consolidation strategies.

By formalizing demand fragmentation as a measurable structural property, entropy provides a tractable bridge between combinatorial system configuration and operational performance. In doing so, it offers a basis for evaluating last-mile policies not only by their local routing effects, but by their impact on the overall dispersion structure of the network.

\bibliography{bib}

@article{brochado2024performance,
  title={Performance evaluation and explainability of last-mile delivery},
  author={Brochado, {\^A}ngela F and Rocha, Eug{\'e}nio M and Addo, Emmanuel and Silva, Samuel},
  journal={Procedia Computer Science},
  volume={232},
  pages={2478--2487},
  year={2024},
  publisher={Elsevier}
}

@article{vanheeswijk2019delivery,
  title={The delivery dispatching problem with time windows for urban consolidation centers},
  author={Van Heeswijk, WJA and Mes,  MRK and Schutten, MJ},
  journal={Transportation Science},
  volume={53},
  number={1},
  pages={203--221},
  year={2019},
  publisher={INFORMS}
}

@book{Clausius1879Mechanical,
  author    = {Clausius, Rudolf},
  title     = {The Mechanical Theory of Heat},
  publisher = {Macmillan \& Co.},
  year      = {1879},
  address   = {London}
}

@article{gudmundsson2013entropy,
  title = {Entropy and Order in Urban Street Networks},
  author = {Gudmundsson, Agust and Mohajeri, Nahid},
  journal = {Scientific Reports},
  volume = {3},
  pages = {3324},
  year = {2013},
  publisher = {Nature Publishing Group},
  doi = {10.1038/srep03324},
  url = {https://doi.org/10.1038/srep03324}
}

@article{halldorsson2020last,
  title={Last-mile logistics fulfilment: A framework for energy efficiency},
  author={Halldorsson, Arni and Wehner, Jessica},
  journal={Research in Transportation Business \& Management},
  volume={37},
  pages={100481},
  year={2020},
  publisher={Elsevier}
}

@article{abbas2006maximum,
  title = {Maximum Entropy Utility},
  author = {Abbas, Ali E.},
  journal = {Operations Research},
  volume = {54},
  number = {2},
  pages = {277--290},
  year = {2006},
  publisher = {INFORMS},
  doi = {10.1287/opre.1050.0253},
  url = {https://doi.org/10.1287/opre.1050.0253}
}

@Article{boltzmannentropy,
AUTHOR = {Sharp, Kim and Matschinsky, Franz},
TITLE = {Translation of Ludwig Boltzmann’s Paper “On the Relationship between the Second Fundamental Theorem of the Mechanical Theory of Heat and Probability Calculations Regarding the Conditions for Thermal Equilibrium” Sitzungberichte der Kaiserlichen Akademie der Wissenschaften. Mathematisch-Naturwissen Classe. Abt. II, LXXVI 1877, pp 373-435 (Wien. Ber. 1877, 76:373-435). Reprinted in Wiss. Abhandlungen, Vol. II, reprint 42, p. 164-223, Barth, Leipzig, 1909},
JOURNAL = {Entropy},
VOLUME = {17},
YEAR = {2015},
NUMBER = {4},
PAGES = {1971--2009},
URL = {https://www.mdpi.com/1099-4300/17/4/1971},
ISSN = {1099-4300},
 publisher =    {MDPI},
DOI = {10.3390/e17041971}
}

@InProceedings{ahmed-entropy-regulation,
  title = 	 {Understanding the Impact of Entropy on Policy Optimization},
  author =       {Ahmed, Zafarali and Le Roux, Nicolas and Norouzi, Mohammad and Schuurmans, Dale},
  booktitle = 	 {Proceedings of the 36th International Conference on Machine Learning},
  pages = 	 {151--160},
  year = 	 {2019},
  editor = 	 {Chaudhuri, Kamalika and Salakhutdinov, Ruslan},
  volume = 	 {97},
  series = 	 {Proceedings of Machine Learning Research},
  month = 	 {09--15 Jun},
  publisher =    {PMLR},
  url = 	 {https://proceedings.mlr.press/v97/ahmed19a.html}
}

@article{lim2025cutting,
  title={Cutting Last-Mile Delivery Costs},
  author={Lim, Stanley Frederick WT},
  journal={MIT Sloan Management Review},
  volume={66},
  number={2},
  pages={15--17},
  year={2025},
  publisher={Massachusetts Institute of Technology, Cambridge, MA}
}

@article{vanheeswijk2020evaluating,
  title={Evaluating urban logistics schemes using agent-based simulation},
  author={Van Heeswijk, WJA and Mes, MaRK and Schutten, JMJ and Zijm, WHM},
  journal={Transportation Science},
  volume={54},
  number={3},
  pages={651--675},
  year={2020},
  publisher={INFORMS}
}

@article{savelsbergh201650th,
  title={50th anniversary invited article—city logistics: Challenges and opportunities},
  author={Savelsbergh, Martin and Van Woensel, Tom},
  journal={Transportation Science},
  volume={50},
  number={2},
  pages={579--590},
  year={2016},
  publisher={INFORMS}
}

@article{shannon1948,
  author    = {Shannon, Claude E.},
  title     = {A Mathematical Theory of Communication},
  journal   = {Bell System Technical Journal},
  volume    = {27},
  number    = {3},
  pages     = {379--423},
  year      = {1948},
  doi       = {10.1002/j.1538-7305.1948.tb01338.x}
}

@article{amazon,
  author    = {Merch{\'a}n, Daniel and Arora, Jatin and Pach{\'o}n, Julian and Konduri, Karthik and Winkenbach, Matthias and Parks, Steven and Noszek, Joseph},
  title     = {2021 Amazon Last Mile Routing Research Challenge: Data Set},
  journal   = {Transportation Science},
  volume    = {58},
  number    = {1},
  pages     = {8--11},
  year      = {2024},
  doi       = {10.1287/trsc.2022.1173},
 publisher={INFORMS}
}

@article{snyder2004,
author = {Snyder, Lawrence},
year = {2004},
month = {09},
pages = {},
title = {Facility Location Under Uncertainty: A Review},
volume = {38},
journal = {IIE Transactions},
doi = {10.1080/07408170500216480}
}

@article{matl2017,
  author    = {Matl, P. and Hartl, R. F. and Vidal, T.},
  title     = {Workload Equity in Vehicle Routing Problems: A Survey and Analysis},
  journal   = {Transportation Science},
  volume    = {52},
  number    = {2},
  pages     = {239--260},
  year      = {2017},
  publisher={INFORMS}
}

@article{janjevic2017investigating,
  title={Investigating the theoretical cost-relationships of urban consolidation centres for their users},
  author={Janjevic, Milena and Ndiaye, Alassane},
  journal={Transportation Research Part A: Policy and Practice},
  volume={102},
  pages={98--118},
  year={2017},
  publisher={Elsevier}
}

@Article{su14020911,
AUTHOR = {Zennaro, Ilenia and Finco, Serena and Calzavara, Martina and Persona, Alessandro},
TITLE = {Implementing E-Commerce from Logistic Perspective: Literature Review and Methodological Framework},
JOURNAL = {Sustainability},
VOLUME = {14},
YEAR = {2022},
NUMBER = {2},
ARTICLE-NUMBER = {911},
URL = {https://www.mdpi.com/2071-1050/14/2/911},
ISSN = {2071-1050},
DOI = {10.3390/su14020911}
}

@article{Mangiaracina2019,
    author = {Mangiaracina, Riccardo and Perego, Alessandro and Seghezzi, Arianna and Tumino, Angela},
    title = {Innovative solutions to increase last-mile delivery efficiency in B2C e-commerce: a literature review},
    journal = {International Journal of Physical Distribution \& Logistics Management},
    volume = {49},
    number = {9},
    pages = {901-920},
    year = {2019},
    month = {10},
    issn = {0960-0035},
    doi = {10.1108/IJPDLM-02-2019-0048},
    url = {https://doi.org/10.1108/IJPDLM-02-2019-0048},
    eprint = {https://www.emerald.com/ijpdlm/article-pdf/49/9/901/1109218/ijpdlm-02-2019-0048.pdf},
}

@article{wang2018innovation,
  title={An innovation diffusion perspective of e-consumers’ initial adoption of self-collection service via automated parcel station},
  author={Wang, Xueqin and Yuen, Kum Fai and Wong, Yiik Diew and Teo, Chee Chong},
  journal={The International Journal of Logistics Management},
  volume={29},
  number={1},
  pages={237--260},
  year={2018},
  publisher={Emerald Publishing Limited}
}

@article{VegaMejia2019,
  author    = {Vega-Mej{\'\i}a, Carlos A. and Montoya-Torres, Jairo R. and Islam, S. M. N.},
  title     = {Consideration of triple bottom line objectives for sustainability in the optimization of vehicle routing and loading operations: a systematic literature review},
  journal   = {Annals of Operations Research},
  volume    = {273},
  pages     = {311--375},
  year      = {2019},
  doi       = {10.1007/s10479-017-2723-9}
}

@article{anderson,
author = {Anderson, Michael and Müller, Anna},
year = {2025},
month = {10},
pages = {114-129},
title = {Causal Discovery in Multi-Echelon Supply Networks: Leveraging Foundation Models for Demand Propagation Analysis},
volume = {2},
journal = {Frontiers in Applied Physics and Mathematics},
doi = {10.71465/fapm416},
 publisher =    {Frontiers Media SA}
}

@article{amaral2020,
title = {An exploratory evaluation of urban street networks for last mile distribution},
journal = {Cities},
volume = {107},
pages = {102916},
year = {2020},
issn = {0264-2751},
publisher = {Elsevier},
doi = {https://doi.org/10.1016/j.cities.2020.102916},
url = {https://www.sciencedirect.com/science/article/pii/S0264275120312646},
author = {Julia Coutinho Amaral and Claudio B. Cunha}
}

\newpage 

\section*{Appendices}

\section*{Stirling Approximation for Structural Entropy}
\label{appendix_stirling}

This appendix derives the asymptotic expansion underlying the linear scaling
result in Proposition~\ref{prop:linear_scaling} and clarifies the order of
magnitude of higher-order corrections.

Recall the structural entropy definition
\begin{equation*}
    G = \ln \left(\frac{N!}{\prod_{k=1}^K p_k!}\right)
    = \ln N! - \sum_{k=1}^K \ln p_k!,
\end{equation*}
where $N \in \mathbb{Z}_{>0}$ is the total number of parcels,
$p_k \in \mathbb{Z}_{\ge 0}$ denotes the number of parcels assigned to
destination $k$, and $\sum_{k=1}^K p_k = N$.
Define empirical shares $\phi_k := p_k/N$ so that $\sum_{k=1}^K \phi_k = 1$.

\subsection*{Leading-order expansion}

We use Stirling's expansion
\[
\ln k! = k \ln k - k + \tfrac{1}{2}\ln(2\pi k)
+ \frac{1}{12k}
- \frac{1}{360k^3}
+ \mathcal{O}(k^{-5}).
\]

Retaining only the leading two terms yields
\[
\ln k! \approx k \ln k - k.
\]

Substituting into $G$ gives
\begin{align*}
G 
&\approx (N \ln N - N)
 - \sum_{k=1}^K (p_k \ln p_k - p_k).
\end{align*}

The linear $-k$ terms cancel exactly due to the conservation constraint
$\sum_{k=1}^K p_k = N$.
Hence
\begin{align*}
G 
&\approx N \ln N - \sum_{k=1}^K p_k \ln p_k \\
&= N \ln N - N \sum_{k=1}^K \phi_k \ln (N \phi_k) \\
&= - N \sum_{k=1}^K \phi_k \ln \phi_k.
\end{align*}

Thus,
\[
G = N H(\{\phi_k\}) + \mathcal{O}(\ln N),
\]
where
\[
H(\{\phi_k\}) = -\sum_{k=1}^K \phi_k \ln \phi_k
\]
is the Shannon entropy (in nats) of the empirical distribution.

Under fixed shares $\{\phi_k\}$, the dominant contribution to structural
entropy scales linearly in $N$.

\subsection*{Logarithmic correction}

Including the $\tfrac{1}{2}\ln(2\pi k)$ term gives
\[
G 
\approx
N H(\{\phi_k\})
+
\frac{1}{2}
\left[
\ln(2\pi N)
-
\sum_{k=1}^K \ln(2\pi p_k)
\right].
\]

This correction term is of order $\mathcal{O}(\ln N)$ when the number of occupied
destinations $K$ grows sublinearly in $N$ or when delivery shares remain
stable.
It captures finite-size effects and depends on the dispersion of the
allocation vector through $\sum_k \ln p_k$.

In the special case of a uniform distribution
$p_k = N/K$ for all $k$, we obtain
\[
G 
\approx
N \ln K
+
\frac{1}{2}
\left[
\ln(2\pi N)
-
K \ln\!\left(2\pi \frac{N}{K}\right)
\right].
\]

\subsection*{Higher-order correction}

Including the $1/(12k)$ term yields
\[
G
\approx
N H(\{\phi_k\})
+
\frac{1}{2}
\left[
\ln(2\pi N)
-
\sum_{k=1}^K \ln(2\pi p_k)
\right]
+
\left[
\frac{1}{12N}
-
\sum_{k=1}^K \frac{1}{12 p_k}
\right].
\]

The final bracket scales as $\mathcal{O}(K)$ and becomes relevant only when a large
number of destinations carry very small parcel counts.
Under fixed structural proportions (as assumed in
Proposition~\ref{prop:linear_scaling}),
this term remains asymptotically negligible relative to the $\mathcal{O}(N)$ leading
contribution.

\medskip

In summary, structural entropy admits the asymptotic expansion
\[
G = N H(\{\phi_k\}) + \mathcal{O}(\ln N),
\]
which justifies the linear scaling behavior established in the main text.
Higher-order terms provide finite-size corrections but do not alter the
order of growth under stable operational shares.

\subsection*{Approximation accuracy}
We quantify the accuracy of the truncations based on realistic instances from the Amazon dataset used in the main text. For $N\in\{100,200,300\}$ we consider average parcels per occupied destination $\bar{p} = N/K$ equal to the values:
$\bar{p} \in \{1.0,\;1.5,\;2.0,\;5.0,\;10.0\}.$
For each pair $(N,\bar{p})$ we round to the nearest integer. We then compute the exact solution to
\begin{equation*}
    G = \ln N! - \sum_{k=1}^K \ln p_k!
\end{equation*}

using the log-gamma function, and the approximations obtained by applying additional terms of Stirling's expansion. Let T1 denote the approximation obtained by the two leading terms, T2 the approximation when the logarithmic correction is added, and T3 the approximation when the higher-order correction is added. From Table \ref{shirling_numerical} we observe that the T1 approximation is a poor estimate in highly fragmented logistics systems as the relative error can exceed 20\%. For numerical estimation this approximation thus overestimates the structural entropy. However, T1 approximation can still be used to compare relative differences between delivery scenarios as the additional linear terms in the T2 and T3 approximations cancel out when forming a ratio. The T2-approximation reduces the relative error to a percentage-level in all combinations and the T3-approximation gives almost exact results. Even higher order terms (i.e., $-1/(360k^3)$) can thus be assumed to be negligible in real-world contexts.

\begin{table}[H] \label{shirling_numerical}
\centering
\caption{Exact entropy $G$ and percentage error of Stirling approximations for different $N$ and average parcels per destination $\bar{p}$.}
\begin{tabular}{cccccc}
\hline
$N$ & $\bar{p}$ & $G^{\text{exact}}$ & Error (T1) & Error (T2) & Error (T3) \\
\hline
100 & 1   & 363.74 & +26.6\%  & +2.2\%   & -0.06\% \\
100 & 1.5 & 344.67 & +21.8\%  & +1.1\%   & -0.01\% \\
100 & 2   & 329.08 & +18.9\%  & +0.6\%   & -0.005\% \\
100 & 5   & 267.99 & +4.6\%   & +0.1\%   & -0.002\% \\
100 & 10  & 212.7  & +8.3\%   & +0.04\%  & $\approx$0\% \\
\hline
200 & 1   & 863.23 & +22.8\%  & +1.9\%   & -0.05\% \\
200 & 1.5 & 825.37 & +18.6\%  & +0.9\%   & -0.01\% \\
200 & 2   & 793.92 & +16.0\%  & +0.5\%   & -0.004\% \\
200 & 5   & 671.73 & +9.8\%   & +0.1\%   & $\approx$0\% \\
200 & 10  & 516.14 & +6.8\%   & +0.03\%  & $\approx$0\% \\
\hline
300 & 1   & 1414.91 & +20.9\% & +1.7\%   & -0.05\% \\
300 & 1.5 & 1357.97 & +17.1\% & +0.8\%   & -0.01\% \\
300 & 2   & 1310.93  & +14.7\% & +0.5\%   & -0.004\% \\
300 & 5   & 1127.66  & +8.9\%  & +0.1\%   & $\approx$0\% \\
300 & 10  & 961.77  & +6.1\%  & +0.03\%  & $\approx$0\% \\
\hline
\end{tabular}
\end{table}

\section*{Numerical Results} \label{numerical_results}

\begin{table*}[bp]
\centering
\caption{Normalized System Entropy by Activation Threshold $t$ (in km), Steepness Parameter $\beta$, and Customer Acceptance Ratio $\lambda$. $A$ denotes the activated customers and $\lambda \cdot A$ the percentage of customers using a pickup point. $G^{\mathrm{norm}}$ includes only delivery-side entropy, normalized by $\ln(N!)$. Baseline $G^{\mathrm{norm}} = 0.912$.}
\label{tab:activation_results}
\begin{tabular}{lccccccc}
\toprule
\textbf{$t$} & \textbf{$\beta$} & \textbf{$\lambda$} & \textbf{A} & \textbf{$\lambda \cdot A$ } & \textbf{Parcels} & $\mathbf{G^{\text{norm}}}$ & \textbf{Reduction} \\
 & & & & & \textbf{Consol.} & & \textbf{from baseline} \\
\midrule
\multirow{12}{*}{0.25}
& \multirow{4}{*}{1.0}
& 25\%  & 36.69\% & 9.17\%  & 9.38\%  & 0.894600 & 1.91\% \\
& & 50\%  & 36.68\% & 18.34\% & 18.81\% & 0.859337 & 5.77\% \\
& & 75\%  & 36.89\% & 27.67\% & 28.29\% & 0.813832 & 10.76\% \\
& & 100\% & 36.84\% & 36.84\% & 38.00\% & 0.761702 & 16.48\% \\
\cmidrule{2-8}
& \multirow{4}{*}{5.0}
& 25\%  & 19.76\% & 4.94\%  & 5.49\%  & 0.904806 & 0.79\% \\
& & 50\%  & 19.63\% & 9.82\%  & 11.09\% & 0.889510 & 2.47\% \\
& & 75\%  & 19.60\% & 14.70\% & 16.82\% & 0.868113 & 4.81\% \\
& & 100\% & 19.78\% & 19.78\% & 22.47\% & 0.843444 & 7.52\% \\
\cmidrule{2-8}
& \multirow{4}{*}{10.0}
& 25\%  & 15.67\% & 3.92\%  & 4.81\%  & 0.906493 & 0.60\% \\
& & 50\%  & 15.67\% & 7.84\%  & 9.57\%  & 0.894227 & 1.95\% \\
& & 75\%  & 15.53\% & 11.65\% & 14.10\% & 0.878749 & 3.65\% \\
& & 100\% & 15.77\% & 15.77\% & 19.14\% & 0.858693 & 5.85\% \\
\midrule
\multirow{12}{*}{0.5}
& \multirow{4}{*}{1.0}
& 25\%  & 42.43\% & 10.61\% & 11.00\% & 0.889463 & 2.47\% \\
& & 50\%  & 42.23\% & 21.12\% & 21.83\% & 0.845866 & 7.25\% \\
& & 75\%  & 42.63\% & 31.97\% & 32.88\% & 0.790484 & 13.32\% \\
& & 100\% & 42.31\% & 42.31\% & 43.32\% & 0.729373 & 20.02\% \\
\cmidrule{2-8}
& \multirow{4}{*}{5.0}
& 25\%  & 35.25\% & 8.81\%  & 9.70\%  & 0.894010 & 1.97\% \\
& & 50\%  & 35.46\% & 17.73\% & 19.40\% & 0.857356 & 5.99\% \\
& & 75\%  & 35.31\% & 26.48\% & 28.91\% & 0.811189 & 11.05\% \\
& & 100\% & 35.25\% & 35.25\% & 38.84\% & 0.757062 & 16.99\% \\
\cmidrule{2-8}
& \multirow{4}{*}{10.0}
& 25\%  & 35.42\% & 8.86\%  & 9.89\%  & 0.892912 & 2.09\% \\
& & 50\%  & 35.44\% & 17.72\% & 19.74\% & 0.854202 & 6.34\% \\
& & 75\%  & 35.44\% & 26.58\% & 29.62\% & 0.805374 & 11.69\% \\
& & 100\% & 35.41\% & 35.41\% & 39.67\% & 0.749594 & 17.81\% \\
\midrule
\multirow{12}{*}{1.0}
& \multirow{4}{*}{1.0}
& 25\%  & 53.69\% & 13.42\% & 13.80\% & 0.879528 & 3.56\% \\
& & 50\%  & 53.83\% & 26.92\% & 27.65\% & 0.817398 & 10.37\% \\
& & 75\%  & 53.81\% & 40.36\% & 41.46\% & 0.741054 & 18.74\% \\
& & 100\% & 53.84\% & 53.84\% & 55.03\% & 0.655486 & 28.13\% \\
\cmidrule{2-8}
& \multirow{4}{*}{5.0}
& 25\%  & 65.09\% & 16.27\% & 17.01\% & 0.866259 & 5.02\% \\
& & 50\%  & 65.00\% & 32.50\% & 33.81\% & 0.783959 & 14.04\% \\
& & 75\%  & 65.06\% & 48.80\% & 51.03\% & 0.680638 & 25.37\% \\
& & 100\% & 65.10\% & 65.10\% & 67.87\% & 0.567107 & 37.82\% \\
\cmidrule{2-8}
& \multirow{4}{*}{10.0}
& 25\%  & 67.81\% & 16.95\% & 17.64\% & 0.863785 & 5.29\% \\
& & 50\%  & 67.88\% & 33.94\% & 35.44\% & 0.774511 & 15.08\% \\
& & 75\%  & 67.75\% & 50.81\% & 52.97\% & 0.666396 & 26.93\% \\
& & 100\% & 67.68\% & 67.68\% & 70.44\% & 0.546588 & 40.07\% \\
\bottomrule
\end{tabular}
\end{table*}

\end{document}